\theoremstyle{plain} 
\newtheorem{thm}{Theorem}[section]
\newtheorem*{thm*}{Theorem}
\newtheorem{prop}{Proposition}[section]
\newtheorem*{prop*}{Proposition}
\newtheorem{lem}{Lemma}[section]
\newtheorem*{lem*}{Lemma}
\theoremstyle{definition}
\newtheorem{defn}{Definition}[section]
\theoremstyle{remark}
\newtheorem{rem}{Remark}[section]
\newtheorem*{rem*}{Remark}
\newcommand{\addQEDstyle}[2]{\AtBeginEnvironment{#1}{\pushQED{\qed}\renewcommand{\qedsymbol}{#2}}
\AtEndEnvironment{#1}{\popQED}} %Symbol at the end of environment (call as \addQEDstyle{environmentname}{symbolname})
\DeclareMathOperator\GL{GL}
\DeclareMathOperator\Sp{Sp}
\DeclareMathOperator\SO{SO}
\DeclareMathOperator\SU{SU}
\DeclareMathOperator\U{U}
\DeclareMathOperator\Hk{Hk}
\DeclareMathOperator\mHK{Hk(M)}
\DeclareMathOperator\mSp{Sp(M)}
\DeclareMathOperator\mSpo{Sp_0(M)}
\DeclareMathOperator\Tan{T}
\DeclareMathOperator\mpSpo{Sp_0'(M)}
\DeclareMathOperator\mpHKo{Hk_0'(M)}
\DeclareMathOperator\SpanIJK{\mathfrak{su}(2)_{V}}
\DeclareMathOperator\SpanIJKM{\mathfrak{su}(2)_{M}}
\DeclareMathOperator\Id{Id}
\DeclareMathOperator\triv{Tr}
\DeclareMathOperator\End{End}
\DeclareMathOperator\rk{rk}
\DeclareMathOperator\Hom{Hom}
\DeclareMathOperator{\Stab}{Stab}
\DeclareMathOperator\Ad{Ad}
\DeclareMathOperator\Lie{Lie}
\DeclareMathOperator\Iso{Iso}
\DeclareMathOperator\LL{L^2}
\DeclareMathOperator\T{T}
\DeclareMathOperator\vol{vol}
\DeclareMathOperator\Sym{Sym}
\DeclareMathOperator\topp{top}
\DeclareMathOperator\Aut{Aut}
\DeclareMathOperator\AH{AH}
\author[J. E. Andersen, A. Malus\`a, G. Rembado]{J\o{}rgen Ellegaard Andersen, Alessandro Malus\`{a}, Gabriele Rembado} %[header]{main}
\title[$\Sp(1)$-symmetric hyperk\"ahler  quantisation]{$\Sp(1)$-symmetric hyperk\"ahler  quantisation} %[header]{main}
\address[J. E. Andersen]{Centre for Quantum Mathematics, Danish Institute for Advanced Study, University of Southern Denmark, Campusvej 55, 5230 Odense M, Denmark}
\email[J. E. Andersen]{jea@sdu.dk}
\address[A. Malus\`{a}]{Department of Mathematics, University of Toronto, 40 St. George Street, Toronto, Ontario, Canada}
\email[A. Malus\`a]{amalusa@math.utoronto.ca}
\address[G. Rembado]{Hausdorff Centre for Mathematics, University of Bonn, Endenicher Allee 62, 53115, Bonn, Germany}
\curraddr{Institut Montpelliérain Alexander Grothendieck (IMAG), University of Montpellier, Place Eugène Bataillon 34090 Montpellier}
\email[G. Rembado]{gabriele.rembado@umontpellier.fr}
\begin{document}

\begin{abstract}
    We provide a new general scheme for the geometric quantisation of $\Sp(1)$-symmetric hyperk\"ahler manifolds, considering Hilbert spaces of holomorphic sections with respect to the complex structures in the hyperk\"ahler 2-sphere.
    Under properness of an associated moment map, or other finiteness assumptions, we construct unitary quantum (super) representations of groups acting as certain Riemannian isometries preserving the 2-sphere, and we study their decomposition in irreducible components. 
    We apply this quantisation scheme to hyperk\"ahler vector spaces, the Taub--NUT metric on $\mathbb{R}^4$, moduli spaces of framed $\SU(r)$-instantons on $\mathbb{R}^4$, and partly to the Atiyah--Hitchin manifold of magnetic monopoles in $\mathbb{R}^3$.
\end{abstract}

{\let\newpage\relax\maketitle} %No page break after title

\setcounter{tocdepth}{1} %Subsections only in TOC
\tableofcontents

\section{Introduction}

The constructions of geometric quantisation offer a recipe for addressing problems related to the quantum mechanics of an object moving in an arbitrary, possibly curved, phase space~\cite{GS77,Woo80}.
The process, abstracting canonical quantisation, is fundamentally based on the structure of a symplectic manifold.
Two of the main goals are to obtain operators subject to commutation relations prescribed by the Poisson bracket, and unitary representations of groups associated to Hamiltonian flows.
However, there are strong limitations to the extent to which these can be achieved in general.
One of the most typical problems is the need of a polarisation, whose existence is generally not guaranteed, nor is its uniqueness ever satisfied.
Further, the choice of a particular polarisation poses serious constraints on which functions and Hamiltonian flows can be quantised.

A common approach to this issue consists in considering instead a \emph{family} of polarisations, parametrised by some smooth manifold.
One then attempts to assemble their corresponding quantum Hilbert spaces into a vector bundle and identify them via the holonomy of some appropriate connection.
In this framework, the natural way to quantise Hamiltonian group actions is by automorphisms of the bundle as a whole rather than of the individual vector spaces.
A group representation is then usually obtained by considering the space of (projectively) flat sections.
The prototypical example of this is the Hitchin connection~\cite{Hit90,ADPW91}, further discussed below.
The latter also has a simple yet interesting adaptation to the case of a symplectic linear space, providing a quantisation of its full symplectic group in the form of a representation of a double cover of it, the metaplectic representation~\cite[Chapter 10]{Woo80}.

Because polarisations on a symplectic manifold often arise as compatible complex structures, it is rather common in geometric quantisation to work with Kähler manifolds~\cite{Cha16,Woo80}.
This approach has been successfully applied to a number of moduli spaces arising from differential and algebraic geometry, representation theory, and mathematical physics. Notable examples include unitary flat connections~\cite{Hit90,ADPW91} and vector bundles on Riemann surfaces, compact coadjoint orbits~\cite{Kir04}, and polygons~\cite{KM,Cha10}.
Unitary flat connections in particular are a good example of the scheme sketched above, as the moduli space comes with a family of Kähler structures parametrised by the Teichmüller space.
The construction of a projectively flat connection in that setting is due to Hitchin~\cite{Hit90} and Axelrod-Della Pietra-Witten~\cite{ADPW91} and was extended to a broader framework in later works~\cite{And06,AG14,AR14}.
The role of flat connections in Chern--Simons theory~\cite{Wit89,Fre95} also motivated further study of the relation between geometric quantisation and other formulations of the theory, including deformation quantisation~\cite{And12,BMS94,KS01,Sch00,Sch01,AM19} and other approaches~\cite{AU1,AU2,AU3,AU4,Las98,AM17}.

In many cases, spaces similar to those above, and related to interesting quantisation problems, come with natural hyperkähler structures rather than just Kähler.
Some of these may be viewed as complexifications of those already mentioned, e.g. flat connections for complex groups~\cite{Wit91}, Higgs bundles~\cite{AGP16}, semisimple/nilpotent (co)adjoint orbits in (dual) complex Lie algebras~\cite{Kro88,Kov96,Biq96}, hyper-polygons, and Nakajima quiver varieties~\cite{Gin09,Nak94}. 
Others, on the other hand, arise independently of an underlying ``real'' version, including for instance the Taub-NUT metrics on $\mathbb{R}^{4}$, moduli spaces of framed $\SU(r)$-instantons and magnetic monopoles, and the Nahm moduli spaces.

Crucially, the triple of complex structures on each of these spaces gives rise to a family of Kähler forms, whose parametrising space comes with its own Kähler structure which identifies it with $\mathbb{C}P^{1}$.
Unlike in the Kähler case, geometric quantisation does not directly apply in this situation because no preferred symplectic structure is given in general.
What is more, many interesting symmetries of such spaces act by hyperkähler rotations, i.e. by permuting the sphere of Kähler structures rather than fixing them individually.
If one of the symplectic forms is fixed by the action, one may focus on that particular structure and apply quantisation with respect to it, an approach that was carried out by Andersen-Gukov-Pei~\cite{AGP16} in the case of the Hitchin moduli space.
Nonetheless, one may still wish to obtain a version of quantisation with respect to other Kähler forms, or to the hyperkähler structure as a whole.
In addition, the induced action on the sphere is in many cases transitive, suggesting again that a more ``global'' approach should be taken in that situation.

The latter is precisely the setup that we are going to address in this work.
Namely, we shall consider a hyperkähler manifold $M$ acted on by a compact Lie group $G$ by hyperkähler rotations and assume that the induced action on $\mathbb{C}P^{1}$ is transitive.
We will also assume given a smooth family of pre-quantum line bundles on $M$ parametrised by $\mathbb{C}P^{1}$ with a lifted equivariant $G$-action.
Carrying out geometric quantisation for each individual symplectic form will give rise to a family of Hilbert spaces, typically of infinite dimension.
We will then attempt to use representation theory to ``break down'' these spaces into finite-dimensional components and assemble each family into a vector bundle over $\mathbb{C}P^{1}$.
We study these objects explicitly and show that their structure is determined by the combinatorics of irreducible sub-representations in the Hilbert spaces.
In particular, we construct natural connections on these bundles and explicitly characterise their curvatures.
While the resulting connection on the overall family fails to be projectively flat, we notice that it defines a holomorphic structure on it.
Based on this, we propose a definition of the overall quantum Hilbert space as the super-cohomology of this object, thus obtaining a natural $G$-representation as a space of sections of a bundle over $\mathbb{C}P^{1}$.

\subsection{Description of the main construction}

Let us expand and detail the brief description sketched above.
Suppose a hyperkähler manifold $M$ is given, $G$ a compact connected Lie group acting on it by hyperkähler rotations.
By this we mean that $G$ acts on $M$ by isometries which permute the Kähler structures on $M$; we will additionally require that the induced action be transitive.
Since Kähler forms are parametrised by $\mathbb{C}P^{1}$, this corresponds to a surjective map $G \to \SO(3)$.
As we shall see, this implies that $G$ is covered by a product $\Sp(1) \times G_{0}$, with $\Sp(1)$ acting on $\mathbb{C}P^{1}$ in the usual way and $G_{0}$ fixing all Kähler structures.

Since no preferred symplectic form is given on $M$ it makes little sense to talk about a pre-quantum line bundle over the hyperkähler manifold.
Instead, we will assume given a Hermitian line bundle $(L,h)$ on $M$ and a pre-quantum connection $\nabla_{q}$ for each symplectic form $\omega_{q}$, depending smoothly on $q \in \mathbb{C}P^{1}$ in an appropriate sense.
We will further assume that the $G$-action lifts to $L$ permuting the connections equivariantly.

If $M_{q}$ denotes the Kähler manifold corresponding to $q \in \mathbb{C}P^{1}$, its geometric quantisation consists of the Hilbert space $\mathcal{H}_{q}$ of $\LL$ holomorphic sections of the corresponding pre-quantum line bundle.
The embedding into $\mathbb{C}P^{1} \times \LL(M,L)$ defines a Hermitian structure on this family, viewed informally as a vector bundle over $\mathbb{C}P^{1}$, together with a compatible connection and $\Sp^{1}$-equivariant $G$-action.
We study this object by decomposing each fibre $\mathcal{H}_{q}$ into istotypical components as a representation of (appropriate subgroups of) $G_{q} \coloneqq \Stab_{G}(q)$.
By transitivity of the $\Sp(1)$-action, all these stabilisers are conjugated and the respective isotypical components identified, thus forming constant-rank sub-families of $\mathcal{H}$.
The following is the central theorem of our work.

\begin{thm}[Cf.~Thm~\ref{thm:CurvaturePF}]
	\label{thm:main}
	Suppose that
	\begin{itemize}
		\item $M$ is a hyperkähler manifold;
		\item $G$ is a connected compact Lie group acting on $M$ by fixing the metric and permuting the symplectic forms transitively;
		\item $L \to M$ is a Hermitian line bundle with a family of pre-quantum connections as in \textsection~\ref{sec:Sp1equi} and a $G$-action covering that on $M$;
		\item $\rho$ is an irreducible representation of $G_{q} \coloneqq \Stab_{G}(\omega_{q})$ for $\omega_{q}$ one of the symplectic forms on $M$;
		\item $\rho$ has finite multiplicity $m^{(\rho)}$ in the space $\mathcal{H}_{q}$ of $\LL$ holomorphic sections of $L \to M$ with respect to the structure associated to $\omega_{q}$.
	\end{itemize}
	For each other symplectic form $\omega_{q'}$, call $\mathcal{H}_{q'}^{(\rho)}$ the isotypical component in $\mathcal{H}_{q'}$ corresponding to $\rho$ under the identification $G_{q} \simeq G_{q'}$ by conjugation in $G$.
	Then the collection of spaces $\mathcal{H}^{(\rho)}$ has a canonical structure as a Hermitian vector bundle over $\mathbb{C}P^{1}$ with compatible connection.
	Moreover, for some $d$ completely determined by $\rho$ there exists an isomorphism
	\begin{equation}
		\mathcal{H}^{(\rho)} \simeq \bigl( \mathcal{L}^{d} \otimes \underline{V_{\rho}} \bigr)^{\oplus m^{\rho}}
	\end{equation}
	preserving the Hermitian structure and connection, where $\underline{V_{\rho}} = \mathbb{C}P^{1} \times V_{\rho}$ with the trivial connection and $\mathcal{L}$ is the standard degree-$1$ $\Sp(1)$-equivariant Hermitian line bundle with connection over $\mathbb{C}P^{1}$.
\end{thm}

The result implies that, informally speaking, the family $\mathcal{H}$ decomposes as a sum of vector bundles with connections, as long as the appropriate multiplicities are finite.
The holonomies on the various components may then be assembled together to form parallel transport operators on $\mathcal{H}$.
The theorem, however, also determines the curvature of the connection on each component, which is proportional to the degree $d$ as in the theorem.
Consequently, the parallel transport operators on $\mathcal{H}$ depend essentially on the choice of paths on the base and fail to unambiguously identify the different Hilbert spaces, even projectively.

Nonetheless, the components $\mathcal{H}^{(\rho)}$ may also be regarded as $G$-equivariant \emph{holomorphic} bundles over $\mathbb{C}P^{1}$.
We then obtain $G$-representations not as spaces of flat sections as customary, but as the cohomology of $\mathcal{H}^{(\rho)}$ as a super vector space.
The following then descends from Thm.~\ref{thm:main}.

\begin{thm}[Cf. \textsection~\ref{sec:super_spaces}]
	\label{thm:super_reps}
	In the setting of Thm.~\ref{thm:main}, call
	\begin{equation}
		H^{(\rho)} \coloneqq H^{*} \bigl( \mathbb{C}P^{1}, \mathcal{H}^{(\rho)} \bigr)
	\end{equation}
	as a super vector space.
	Then $H^{(\rho)}$ comes with a Hermitian structure and compatible $G$-action.
	Calling $d$ as in Thm.~\ref{thm:main}, it is a direct sum of $\ \abs{d+1} m^{(\rho)}$ copies of $V_{\rho}$, all in even (resp. odd) degree if $d \geq 0$ (resp. $d<0$).
	In particular, the completed orthogonal sum
	\begin{equation}
		H \coloneqq \overline{\bigoplus_{\rho} H^{(\rho)}} \, ,
	\end{equation}
	with $\rho$ ranging over all the isomorphism classes of irreducible $G$-representations, defines a Hilbert space $G$-representation, and the above is the isotypical decomposition.
\end{thm}

This viewpoint also lends itself to an approach in terms of rank-generating series and localisation formul\ae, something which we address in \textsection~\ref{sec:gen_series}.

Again, this space $H$ may informally be thought of as the cohomology of the sum of all the $\mathcal{H}^{(\rho)}$'s, regarded now as a \emph{holomorphic} vector bundle over $\mathbb{C}P^{1}$.
It is interesting to note how this is reminiscent of the description of $M$ in terms of its twistor space, a holomorphic fibration $Z \to \mathbb{C}P^{1}$ (plus additional holomorphic data).
It would be an interesting problem to investigate whether our setup can be obtained in terms of twistor data by purely holomorphic constructions, something which we would like to address in a separate work.

The most crucial assumptions in our construction, besides the surjectivity of $G \to \SO(3)$, is the finite-dimensionality of the isotypical components in $\mathcal{H}_{q}$.
For that reason, we also investigate sufficient conditions to ensure it.
They can be summarised as follows.

\begin{thm}[Cf. Thmm.~\ref{thm:fdmm} and~\ref{thm:formulaHHprime}]
	Suppose one of the Kähler forms $\omega_{q}$ on $M$ is fixed, $S \subseteq \Stab_{G}(\omega_{q})$ a connected Lie subgroup, $\rho$ an isomorphism class of $S$-representations.
	Then each of the following is a sufficient condition for the corresponding isotypical component in $\mathcal{H}_{q}$ as an $S$-representation to have finite dimension:
	\begin{itemize}
		\item The Kostant moment map for $S$ is proper on $M_{q}$, and its action extends holomorphically to the complexification of $S$;
		\item $S$ is a torus, $M_{q}$ has the structure of an affine scheme or Stein space, and the Kostant moment map for $S$ is proper;
		\item $S$ is a torus, $M_{q}$ has the structure of an affine scheme or Stein space, and \hbox{$M \sslash_{w} S$}, $w$ the weight of $\rho$, admits a compactification with rational singularities and boundary of codimension greater than or equal to $2$.
	\end{itemize}
\end{thm}

A further way to ensure finite dimensionality can be found in the discussion of meromorphic torus actions in~\cite{Wu}.

\subsection{Applications and further directions}

In \S~\ref{sec:examples} we showcase applications of the main construction.
The first one is a hyperk\"ahler vector space $V$ of real dimension $4n$, with $n \in \mathbb{Z}_{\geq 1}$. 
In this case 
\begin{equation}
    \Hk(V) \simeq \Sp(n) \cdot \Sp(1) \, ,
\end{equation}
identifying $V \simeq \mathbb{H}^n$ (cf. Rem.~\ref{rem:notation}).
Indeed under this isomorphism $\Sp(1)$ acts on $V$ via right multiplication of unit-norm imaginary quaternions, and commutes with the natural $\Sp(n)$-action.
Further the norm associated to the hyperk\"ahler metric provides a hyperk\"ahler potential and we can apply the abstract construction (see Thmm.~\ref{thm:CurvHKV} and ~\ref{thm:HKvectMT}).

\vspace{5pt}

Importantly, there are many more examples of (nonflat) $\Sp(1)$-symmetric hyperk\"ahler manifolds. 
These include moduli spaces of magnetic monopoles on $\mathbb{R}^3$ by the work of Atiyah--Hitchin and Taubes~\cite{AH} or equivalently, by the work of Donaldson~\cite{Don84b}, the moduli spaces of based rational maps from $\mathbb{C}P^1$ to itself; moduli spaces of framed $\SU(r)$-instantons on $\mathbb{R}^4$, by the work of Maciocia~\cite{Maciocia}; the hyperk\"ahler structure on nilpotent orbits, by the work of Kronheimer~\cite{Kronheimer}, and more generally the hyperk\"ahler Swann bundle over any quaternionic K\"ahler manifold~\cite{Swann}. 
In four dimensions a complete classification of $\Sp(1)$-symmetric hyperk\"ahler manifolds is given (up to finite covers) by the work of Gibbons--Pope~\cite{S21} and by Atiyah--Hitchin~\cite{AH}.
The three examples are the flat metric on $\mathbb{H}$, the Taub--NUT metric, and the hyperk\"ahler metric on the moduli space of charge-2 monopoles, i.e. the Atiyah--Hitchin manifold. 

We establish in \S~\ref{sec:4DExample} and \ref{sec:Instantons} that the theorems~\ref{thm:main}, \ref{thm:super_reps} and~\ref{thm:formulaHHprime} (or slight modifications thereof) apply to some of these examples, producing a quantisation and corresponding irreducible unitary (super) representations of distinguished groups of hyperk\"ahler isometries.

%The end of remarks/examples is signaled by a ``$\triangle$''.

\section*{Acknowledgements}

The authors would like to thank a number of people whose feedback improved the results of this work, including Nigel Hitchin, Eckhard Meinrenken, Steve Rayan, Andrew Swann, and the graduate students from the gLab at the University of Toronto.
Special thanks go to Maxence Mayrand, whose input was of crucial value.

All authors thank the former Centre for the Quantum Geometry of Moduli Spaces (QGM) at the Aarhus University, and the new Centre for Quantum Mathematics (QM) at the University of Southern Denmark, for hospitality and support during various phases of this project.

The first-named author was partially supported by the Danish National Science Foundation Center of Excellence grant, QGM DNRF95, and by the ERC-SyG project, Recursive and Exact New Quantum Theory (ReNewQuantum) which received funding from the European Research Council (ERC) under the European Union’s Horizon 2020 research and innovation programme under grant agreement No 810573.

The second-named author thanks for their support the University of Toronto and the University of Saskatchewan, as well as the Pacific Institute for the Mathematical Sciences (PIMS) and the Centre for Quantum Topology and its Applications (quanTA).

The research of the third-named author was supported by the National Center of Competence in Research (NCCR) SwissMAP, of the Swiss National Science Foundation, and by the Deutsche Forschungsgemeinschaft (DFG, German Research Foundation) under Germany’s Excellence Strategy-GZ 2047/1, Projekt-ID 390685813; their work is now funded by the European Union under the grant agreement n. 101108575 (HORIZON-MSCA project~\href{https://ec.europa.eu/info/funding-tenders/opportunities/portal/screen/how-to-participate/org-details/999999999/project/101108575/program/43108390/details}{QuantMod}).

Additionally, the authors wish to thank the anonymous referees for carefully reading a previous draft of this paper, and for their useful feedback and constructive comments.

\section{Abstract \texorpdfstring{$\Sp(1)$}{Sp(1)}-symmetric hyperk\"ahler quantisation}
\label{sec:setup}

\subsection{Hyperk\"ahler manifolds and their symmetry groups}

Let $n$ be a positive integer and $M$ a smooth manifold of dimension 4$n$.

\begin{defn}
A hyperk\"ahler structure on $M$ is a Riemannian metric $g$ and an ordered triple $(I,J,K)$ of covariant constant orthogonal automorphism of $\T M$ satisfying the quaternionic identities $I^2 = J^2 = K^2 = IJK = -\Id_{\T M}$.
\end{defn}

It follows that $I,J,K$ are $g$-skew-symmetric global sections of $\End( \operatorname{T}M) \to M$, and we denote $\SpanIJKM$ the three-dimensional real Lie algebra they span.

The hyperk\"ahler 2-sphere of complex structures of $(M,g,I,J,K)$ is 
\begin{equation}
    \label{eq:hyperkaehler_2_sphere}
    \mathbb{S}_{IJK} \coloneqq \Set{I_q = aI + bJ + cK | q = (a,b,c) \in \mathbb{R}^3, \, a^2+b^2+c^2 = 1} \subseteq \SpanIJKM \, .
\end{equation}

As customary, the structure on $M$ identifies~\eqref{eq:hyperkaehler_2_sphere} with the 2-sphere of unit-norm imaginary quaternions, i.e. to $\mathbb{C}P^1$ as a K\"ahler manifold.
In particular for $q \in \mathbb{C}P^1$ there is a (real) symplectic form on $M$ defined by 
\begin{equation}
\label{eq:kaehler_forms}
    \omega_q(v,w) \coloneqq g\big(I_q v,w\big), \qquad \text{for } v,w \in \T M \, .
\end{equation} 
The triple $M_q \coloneqq (M,I_q,\omega_q)$ is a K\"ahler manifold, and for further use we call $\mu_q = \dif \vol \in \Omega^{\topp}(M)$ the Liouville volume form---independent of $q \in \mathbb{C}P^1$ as it agrees with the Riemannian volume form of $(M,g)$.

\begin{rem}
	The above data can be encoded in a fibration $\pi_{\mathbb{C}P^1} \colon Z \to \mathbb{C}P^1$ of K\"ahler manifolds over the Riemann sphere, the \emph{twistor space} of $(M,g,I,J,K)$.
	Clearly this family comes with a natural global trivialisation $Z \simeq M \times \mathbb{C}P^1$ as a smooth fibre bundle, but not as fibre bundle with symplectic or complex fibres.
	Nonetheless the natural complex structure on $Z$ makes $Z \to \mathbb{C}P^1$ into a \emph{holomorphic} fibre bundle~\cite[pp.141-142]{Hit92}.
\end{rem}

Now consider the group $\mSp = \Sp(M,g,I,J,K) \subseteq \Iso(M,g)$ of Riemannian isometries of $(M,g)$ preserving the K\"ahler forms $\omega_q$ (or equivalently the complex structures $I_q$) simultaneously for all $q \in \mathbb{C}P^1$.
This group is often referred to as the \emph{hyper-unitary} group.
Denoting $\Aut_{0} (Z)$ the group of holomorphic automorphisms of $Z \to \mathbb{C}P^1$ over the identity, there is a natural morphism
\begin{equation}
\label{eq:Hk_act_Z_0}
	\mSp \longrightarrow \Aut_{0} (Z),
\end{equation}
given by the fibrewise action of $\mSp$.

We shall consider a group of isometries that preserve the hyperk\"ahler structure in a looser sense, relaxing the condition that differentials should commute with $I$, $J$, and $K$ individually.

\begin{defn}
    Let $\mHK \subseteq \Iso(M,g)$ be the subgroup stabilising the Lie algebra $\SpanIJKM$:
    \begin{equation}
        \mHK = \text{Hk}(M,g,I,J,K) \coloneqq \Set{ \varphi \in \Iso(M,g) | \Ad_{\dif \varphi} \bigl( \SpanIJKM \bigr) = \SpanIJKM } \, .
    \end{equation}
\end{defn}

Hence $\mHK$ acts on $\SpanIJKM$, and $\mSp \subseteq \mHK$ is the kernel of this action.
Moreover the adjoint action $\Ad$ on $\SpanIJKM \simeq \mathbb{R}^3$ is by positive isometries for the standard Euclidean structure, resulting in a group morphism 
\begin{equation}
    \Ad \colon \mHK \longrightarrow \SO(3) \, , \qquad \Ad \colon \varphi \longmapsto \Ad_{\dif\varphi} \, , 
\end{equation}
and an action on the hyperk\"ahler 2-sphere~\eqref{eq:hyperkaehler_2_sphere}---simply denoted $q \mapsto \varphi.q$.
The combination of the actions of $\Hk(M)$ on $\mathbb{C}P^{1}$ and $M$ itself naturally extends~\eqref{eq:Hk_act_Z_0} to a map
\begin{equation}
	\label{eq:Hk_act_Z}
	\Hk(M) \to \Aut(Z) \, ,
\end{equation}
where $\Aut(Z)$ denotes the full group of biholomorphisms of $Z$ compatible with the fibration map and covering arbitrary Kähler automorphisms of $\mathbb{C}P^{1}$.

Suppose now given a connected compact Lie group $G$, and a $G$-action
\begin{equation}
	\label{eq:G_act_M}
	\rho \colon G \to \Hk(M)
\end{equation}
on $M$ by transformations in $\Hk(M)$.
We will sometimes denote
\begin{equation}
	\label{eq:G_act_Z}
	\rho^{Z} \colon G \to \Aut(Z)
\end{equation}
the composition of $\rho$ with~\eqref{eq:Hk_act_Z}; where unambiguous, we will often denote the $G$-action simply by $(\rho(g))(p) = gp$, and similarly for $\rho^{Z}$.
As in the introduction, we require that the induced $G$-action on $\mathbb{C}P^{1}$ be transitive, or equivalently that the corresponding map $G \to \SO(3)$ be surjective.
The kernel $G_{0}$ of this action is then also a compact Lie group, and by construction it acts on $M$ by transformations in $\mSp$.

\begin{lem}
	\label{lem:Sp1action}
	The induced $G$-action on $\mathbb{C}P^{1}$ factors through a morphism
	\begin{equation}
		\sigma \colon \Sp(1) \to G
	\end{equation}
	from the universal cover $\Sp(1)$ of $\SO(3)$.
	This, moreover, arises from a covering map $G_{0} \times \Sp(1) \to G$.
\end{lem}

\begin{proof}
	By compactness, the Lie algebra $\mathfrak{g} \coloneqq \operatorname{Lie}(G)$ admits a non-degenerate invariant pairing.
	Chosen such a pairing, the orthogonal complement of $\mathfrak{g}_{0} \coloneqq \operatorname{Lie}(G_{0})$ is a sub-algebra which maps isomorphically to $\mathfrak{so}(3) \simeq \mathfrak{su}(2)$.
	This induces a section $\mathfrak{su}(2) \to \mathfrak{g}$ which integrates to the desired map $\sigma$.
	In fact, since $\mathfrak{g}_{0}$ and $\mathfrak{g}_{0}^{\perp}$ commute with each other, the splitting $\mathfrak{g} \simeq \mathfrak{g}_{0} \oplus \mathfrak{g}_{0}^{\perp}$ is an isomorphism of Lie algebras.
    In particular, every element of $G_{0}$ commutes with $\sigma(\Sp(1))$, resulting in a map $G_{0} \times \Sp(1)$ as claimed.
\end{proof}

In other words, for $G$ as above, a $G$-action by transitive hyperkähler rotations always comes from an $\Sp(1)$-action, and up to covers it splits as the product with an action by $\mSp$.
Henceforth we shall assume fixed a $\sigma$ as above.

\subsection{Pre-quantum data}
\label{sec:Sp1equi}

As already noted, a notion of pre-quantum line bundle on $M$ is ill-posed, since a hyperkähler manifold comes with a continuous family of incompatible pre-quantum conditions.
Instead, we will assume given a Hermitian line bundle $(L,h)$ on $M$ together with a smooth family of compatible connections $\nabla_{q}$, $q\in\mathbb{C}P^{1}$, each with curvature $F_{q}=-i\omega_{q}$.
The smoothness in $q$ may be expressed by the condition that, if a section $\psi$ of $\pi_{M}^{*} L \to Z$ is smooth, then so is the family $\nabla_{q} \psi\rvert_{q}$, as a section of $\pi_{M}^{*} (L \otimes T^{*} M)$.
Equivalently, for every local trivialisation of $L$, the induced connection potentials should depend smoothly on $q \in \mathbb{C}P^{1}$.
Together with the trivial derivative along the directions of $\mathbb{C}P^{1}$ in $Z \simeq M \times \mathbb{C}P^{1}$, these $\nabla_{q}$'s assemble to form a connection on $\pi^{*} L \to Z$.
Additionally, we will need to require that $L$ be equipped with a Hermitian $G$-action
\begin{equation}
	\label{eq:G_act_L}
	\rho^{L} \colon G \to \Aut(L,h)
\end{equation}
which lifts the one on $M$ and permutes the connections equivariantly.

In practice, the $G$-action may not always come with preferred pre-quantum data as above.
We shall now investigate criteria to determine whether such data exists for a given action.

A necessary condition for the existence of a pre-quantum line bundle on a symplectic manifold is that the symplectic form represent an integral class in cohomology.
Conversely, in that case a pre-quantum line bundle can be constructed by a diagram chasing procedure on the \v{C}ech--de Rham complex~\cite{Woo80}.

In our situation, we will need to require that
\begin{equation}
    \bigl[ \omega_q \bigr] \in H^2(M,\mathbb{Z}) \, \text{ for all } q \in \mathbb{C}P^1 \, .
\end{equation}
In fact, if the condition holds for at least one $q$, then by the $\Sp(1)$-action it does for all $q$, and it then follows by continuity that $[\omega_{q}]$ is independent of $q$.
The diagram chasing procedure mentioned above may then be carried out with differential forms on $M$ depending smoothly on $q$.
Hence a family of pre-quantum line bundles exists if and only if $[\omega_{q}]$ is integral for some $q$.

Suppose such a family is fixed, with underlying Hermitian line bundle $(L,h)$, and call $L_{q} \coloneqq (L,h,\nabla_{q})$ for each $q$.
For every $g \in G$, the structure of $g^{*} L_{g.q} \otimes L_{q}^{-1}$ defines a family of \emph{flat} Hermitian connections.
Since such objects are classified up to isomorphism by $\Gamma \coloneqq H^1 \bigl( M,\U(1) \bigr)$, this defines a map
\begin{equation}
	\label{eq:u-cocycle}
    u \colon G \longrightarrow C^\infty (\mathbb{C}P^1, \Gamma) \, , \qquad u \colon g \longmapsto \Bigl( q \mapsto \bigl[ g^*L_{g.q} \otimes L_{q}^{-1} \bigr] \Bigr) \, .
\end{equation}
Viewing the abelian group $\Gamma' \coloneqq C^{\infty} (\mathbb{C}P^{1}, \Gamma)$ as a $G$-module under the pull-back action, $u$ defines a cocycle in $C^{1} (G, \Gamma')$.

\begin{lem}
	Suppose $(L,h)$ is a Hermitian line bundle over $M$ with a family of pre-quantum connections $\nabla_{q}$ smoothly parametrised by $\mathbb{C}P^{1}$.
	The cohomology class of the coycle $u$ from~\eqref{eq:u-cocycle} vanishes in $H^{1} (G, \Gamma')$ if and only if there exist a Hermitian line bundle $B$ and a family of Hermitian flat connections $\nabla^{B}_{q}$ smoothly parametrised by $\mathbb{C}P^{1}$ such that, for all $q \in \mathbb{C}P^{1}$ and $g \in G$,
	\begin{equation}
		\label{eq:twist}
		g^{*} \bigl(L_{g.q} \otimes B_{g.q}^{-1} \bigr) \simeq L_{q} \otimes B_{q}^{-1}
	\end{equation}
	as Hermitian bundles with connection, where $B_{q} \coloneqq (B, \nabla^{B}_{q})$.
\end{lem}

\begin{proof}
	Suppose such a family exists.
	Then~\eqref{eq:twist} is equivalent to
	\begin{equation}
		g^{*} L_{g.q} \otimes L_{q}^{-1} \simeq g^{*} B_{g.q} \otimes B_{q}^{-1} \, ,
	\end{equation}
	i.e. $u = \delta \Lambda$ for $\Lambda(q) \coloneqq [B_{q}] \in \Gamma$, and therefore $[u] = 0$.
	
	Conversely, suppose that $u = \delta \Lambda$ for some $\Lambda \in \Gamma'$.
	It follows from the exact sequence
	\begin{equation}
		H^{1}(M, \mathbb{R}) \to H^{1} \bigl(M,\U(1)\bigr) \to H^{2}(M,\mathbb{Z}) \to H^{2}(M,\mathbb{R})
	\end{equation}
	that the components of $\Gamma = H^{1}\bigl(M,\U(1)\bigr)$ are labelled by the torsion of $H^{2}(M,\mathbb{Z})$, while the identity component is covered by $H^{1} (M,\mathbb{R})$.
	Since $\Lambda \colon \mathbb{C}P^{1} \to \Gamma$ is a continuous map, we may fix some $\Lambda_{0} \in \Gamma$ so that $\Lambda-\Lambda_{0}$ takes values in the identity component.
	Since $\mathbb{C}P^{1}$ is simply connected, this lifts to a map $\widetilde{\Lambda} \colon \mathbb{C}P^{1} \to H^{1} (M,\mathbb{R})$.
	Choose a collection of $1$-forms $\alpha_{1}, \ldots, \alpha_{n}$ on $M$ whose de Rham cohomology classes form a basis of $H^{1} (M, \mathbb{R})$.
	Expressing $\widetilde{\Lambda}$ as
	\begin{equation}
		\widetilde{\Lambda} (q) = \sum_{i = 1}^{n} c_{i} (q) [\alpha_{i}] \, ,
	\end{equation}
	we see that each $c_{i}$ is a smooth function of $q$ and therefore $\alpha = \sum_{i = 1}^{n} c_{i} \alpha_{i}$ is a smooth family of $1$-forms on $M$ parametrised by $\mathbb{C}P^{1}$.
	Choosing a representative $(B,\nabla^{B}_{0})$ of $\Lambda_{0} \in H^{1} (M, \U(1))$ and setting $\nabla^{B}_{q} \coloneqq \nabla^{B}_{0} + \alpha(q)$, it follows by construction that
	\begin{equation}
		\Lambda(q) = \bigl[(B, \nabla_{q}^{B}) \bigr] \, .
	\end{equation}
	Expanding and manipulating the condition $u = \delta \Lambda$ leads to~\eqref{eq:twist}, as desired.
\end{proof}

The lemma shows that, if $[u]=0$, then $L$ may be replaced by a new family of pre-quantum line bundles on which the action of every element of $G$ admits an equivariant lift.
In that case, the action of the group
\begin{equation}
	G' \coloneqq \Set{ \varphi \in \Aut(L,h) | \varphi \text{ covers some $g \in G$}}
\end{equation}
covers that of $G$ on $M$ surjectively while permuting the connections equivariantly.
Notice moreover that $G'$ is also compact and connected, being a central extension of the image of $G$ in $\Hk(M)$, so the discussion from the previous section also applies to it.
In particular, by Lem.~\ref{lem:Sp1action}, there exists a map $\sigma^{L} \colon \Sp(1) \to G'$ lifting the $\Sp(1)$-action on $M$.
Even though there may not be a lifted $G$-action on $L$, we obtain one by replacing the group with $G'$, which does not essentially change the action on $M$.

The simplest vanishing $[u] = 0$ is obtained if $\Gamma$ is trivial (which we will see in some examples) or by the existence of a hyperk\"ahler potential (which we discuss in \S~\ref{sec:HKpotSec}).

Up to the necessary replacements, in what follows we thus assume to have fixed a family of pre-quantum connections with an equivariant action $\rho^{L} \colon G \to L$.

\subsection{Geometric quantisation}

Following the prescription of geometric quantisation, for $q \in \mathbb{C}P^1$ consider the separable Hilbert space
\begin{equation}
\label{eq:quantum_space}
    \mathcal{H}_q \coloneqq \Set{ \psi \in H^0(M_q,L_{q}) | \int_M h(\psi,\psi) \dif \vol < \infty}  \subseteq \LL(M,L) \, ,
\end{equation} 
using the holomorphic structure $\overline{\partial}_q = \nabla_q^{0,1}$ and the standard $\LL$ Hermitian product:
\begin{equation}
\label{eq:hermitian_product}
    \langle \psi \mid \psi' \rangle \coloneqq \int_M h(\psi,\psi') \dif \vol \, , \qquad \text{for } \psi,\psi' \in \mathcal{H}_q \, .
\end{equation}
Let us denote by $\mathcal{H}$ the family of Hilbert spaces thus defined over $\mathbb{C}P^1$.

By construction there are unitary isomorphisms
\begin{equation}
    \rho^{\mathcal{H}}_{g} \colon \mathcal{H}_q \longrightarrow \mathcal{H}_{g.q} \qquad \text{for } q \in \mathbb{C}P^1 \text{ and } g \in G \, ,
\end{equation}
explicitly given by
\begin{equation}
\label{eq:pull_back_action}
	\bigl( \rho^{\mathcal{H}}_{g}\psi \bigr) (m) \coloneqq \rho_{g}^L \Bigl( \psi \bigl(\rho^{Z}_{g^{-1}} m \bigr) \Bigr) \qquad \text{for } m \in M \, .
\end{equation}

\subsection{Decomposition of \texorpdfstring{$\mathcal{H}_q$}{H-q}}
\label{sec:The_grading}

We will now consider the decompositions of the spaces~\eqref{eq:quantum_space} induced by viewing them as representations under the action~\eqref{eq:pull_back_action}.
For a given $q \in \mathbb{C}P^{1}$, restricting $\rho^{\mathcal{H}}$ to
\begin{equation}
	G_{q} \coloneqq \Stab_{G}(q) \, ,
\end{equation}
defines a group action on $\mathcal{H}_{q}$ by unitary operators, i.e. a Hilbert space representation.
By the Peter--Weyl theorem~\cite[Thm.~1.12]{Knapp}, $\mathcal{H}_{q}$ decomposes a completed orthogonal sum of irreducible components.
Similarly, calling
\begin{equation}
	T_{q} \coloneqq \Stab_{\Sp(1)} (q)
\end{equation}
the maximal torus in $\Sp(1)$ fixing $q$, its action on $\mathcal{H}_{q}$ gives a decomposition
\begin{equation}
\label{eq:decomp}
	\mathcal{H}_q = \overline{\bigoplus_{d \in \mathbb{Z}} \mathcal{H}_q^{(d)}}  \, ,
\end{equation}
where $\mathcal{H}^{(d)}_q \subseteq \mathcal{H}_q$ is the isotypical component corresponding to the character 
\begin{equation}
    T_q \simeq \U(1) \longrightarrow \mathbb{C}^{\times} \, , \quad z \longmapsto z^d \, ,
\end{equation}
under the natural identification with the standard torus $\operatorname{U}(1) \subseteq \mathbb{C}^{\times}$.
Since $T_{q}$ commutes with $G_{0} \subseteq G_{q}$, each component $\mathcal{H}_{q}^{(d)}$ is a representation of $G_{0}$.
Therefore, we obtain a refinement of the decomposition above as
\begin{equation}
\label{eq:decompSp}
    \mathcal{H}_q^{(d)} =  \overline{\bigoplus_{\lambda \in \Lambda} \mathcal{H}_{q,\lambda}^{(d)}} \, ,
\end{equation}
where $\Lambda$ denotes the set of (analytically) integral weights of $G_{0}$ and $\mathcal{H}_{q,\lambda}^{(d)}$ is the isotypical component in $\mathcal{H}_{q}^{(d)}$ of maximal weight $\lambda$.
In what follows we shall often call $\Lambda^{(d)} \subseteq \Lambda$ the subset of ``active'' representations.%
\footnote{The subset $\Lambda^{(d)}$ is independent of $q \in \mathbb{C}P^1$ since the $\mpSpo$-modules $\mathcal{H}_q^{(d)}$ are isomorphic under the $\Sp(1)$-action.}

\begin{rem}
	\label{rem:rho=d_lambda}
	It is not difficult to see, given that $G_{q}$ is covered by $T_{q} \times G_{0}$, that every irreducible representation of $G_{q}$ has a single weight for $T_{q}$ and is also irreducible for $G_{0}$.
	Therefore, every irreducible $G_{q}$-representation induces a pair $(d,\lambda)$ of weights for $T_{q}$ and $G_{0}$, by which the representation itself is unambiguously determined.
	In particular, the decomposition~\eqref{eq:decompSp} is equivalent to the one into isotypical components under $G_{q}$.
\end{rem}

In a similar way we may also consider a maximal torus $T \subseteq G_{0}$ and find
\begin{equation}
\label{eq:decompMT}
	\mathcal{H}^{(d)}_q = \overline{\bigoplus_{a \in T^\vee} \mathcal{H}^{(d)}_{a,q}} \, ,
\end{equation}
where $\mathcal{H}^{(d)}_{a,q} \subseteq \mathcal{H}^{(d)}_q$ is the isotypical component of the character $a \colon T \to \mathbb{C}^{\times}$.
Again, the decomposition above is equivalent to the one we would obtain by considering the action of the maximal torus $T_{q}' \coloneqq T_{q} \cdot T \subseteq G_{q}$ on $\mathcal{H}_{q}$.

We denote $\mathcal{H}^{(d)}$, $\mathcal{H}_{a}^{(d)}$ and $\mathcal{H}_{\lambda}^{(d)}$ the families of Hilbert spaces thus defined over $\mathbb{C}P^1$, so that we have $\LL$-completed orthogonal direct sums
\begin{equation}
    \mathcal{H} = \overline{\bigoplus_{d \in \mathbb{Z}} \mathcal{H}^{(d)}} \, , \qquad \overline{\bigoplus_{\lambda \in \Lambda} \mathcal{H}_{\lambda}^{(d)}} = \mathcal{H}^{(d)} = \overline{\bigoplus_{a \in T^{\vee}} \mathcal{H}_{a}^{(d)}} \, .
\end{equation}

\subsection{Structure of \texorpdfstring{$\mathcal{H}^{(d)}_\lambda$}{H-lambda}}
\label{sec:untwisted_connection}

Our main assumption is the following, unless otherwise stated:

\begin{center}
    \framebox[1.1\width]{
    The spaces $\mathcal{H}_{q,\lambda}^{(d)}$ are finite-dimensional.}
\end{center}

The goal of this section is to make each family $\mathcal{H}_{\lambda}^{(d)}$, under the finite-dimensionality condition above, into a vector bundle, and equip it with a connection.
The simplest way to define a smooth structure on $\mathcal{H}_{\lambda}^{(d)}$ is to assume it is a Banach sumbanifold of the product $\mathbb{C}P^{1} \times \LL(M,L)$, with $G$ acting smoothly on it.
We shall now investigate the structure induced by this assumption, and later show that the same data can be obtained canonically from the group structure.
    
Assume then (temporarily, cf. Rem.~\ref{rem:alternative_definition} and below) that the family of Hilbert spaces $\mathcal{H}^{(d)}_{\lambda}$ forms a \emph{smooth} Banach sub-manifold of the trivial Hilbert bundle $\underline{\LL(M,L)} \to \mathbb{C}P^1$.
We can then differentiate smooth local sections $\psi$ of $\mathcal{H}^{(d)}_{\lambda} \to \mathbb{C}P^1$, viewed as maps $\mathbb{C}P^{1} \to \LL(M,L)$, along tangent vectors on $\mathbb{C}P^1$.
Then, since $\mathcal{H}^{(d)}_{q,\lambda} \subseteq \LL (M,L)$ is a closed subspace, there are orthogonal projections
\begin{equation}
	\pi^{(d)}_{q,\lambda} \colon \LL (M ,L) \longrightarrow \mathcal{H}^{(d)}_{q,\lambda} \, .
\end{equation}

\begin{defn}
	\label{def:L^2_connection}
    For any tangent vector $X \in T_q \mathbb{C}P^1$ set 
    \begin{equation}
    	\nabla_{X}^{\mathcal{H}_\lambda^{(d)}} \psi \coloneqq \pi^{(d)}_{q,\lambda} \Bigl( X[\psi] \Bigr) \in \mathcal{H}^{(d)}_{q,\lambda} \, .
    \end{equation}
\end{defn}

\begin{rem}
\label{Rem21}
    The same definition (of the standard $\LL$-connection) can be given verbatim in the case where the families $\mathcal{H}^{(d)}_a \subseteq \mathcal{H}^{(d)}$ also constitute smooth submanifolds.
\end{rem}

\begin{rem}
	\label{rem:char_nabla}
	This covariant derivative is characterised by the property that
	\begin{equation}
		\Braket{ \nabla_{X}^{\mathcal{H}_{\lambda}^{d}} \psi | \psi'} = \Braket{X[\psi] | \psi'}
	\end{equation}
	for all $X, \psi, \psi'$ as appropriate.
\end{rem}

\begin{prop}
\label{thm:invariant_metric_connection}
	The covariant-derivative operators of Def.~\ref{def:L^2_connection} are compatible with $\rho^{\mathcal{H}}$ and the Hermitian structure on $\mathcal{H}_\lambda^{(d)} \to \mathbb{C}P^1$.
\end{prop}

\begin{proof}
	The operators $\nabla_{X}^{\mathcal{H}_{\lambda}^{(d)}}$ satisfy Leibnitz and preserve the Hermitian pairing by construction.
	We need only show that they are $\rho^{\mathcal{H}}$-equivariant.
	Given $g \in G$, $q \in \mathbb{C}P^{1}$, a section $\psi$ of $\mathcal{H}_{\lambda}^{(d)} \to \mathbb{C}P^1$, and a tangent vector $X \in \operatorname{T}_{q} \mathbb{C}P^1$, we have
	\begin{equation}
		X \bigl[ \rho_{g} \psi \bigr] = \rho_{g} \bigl( (g_{*}^{-1} X) [\psi] \bigr) \, ,
	\end{equation}
	where the superscripts in the actions were removed for convenience.
	Combining the above with a change of variables in~\eqref{eq:hermitian_product}, one sees that
	\begin{equation}
	    \Braket{ X \bigl[\rho_{g} \psi \bigr] | \psi'} = \Braket{ \bigl(g_{*}^{-1} X\bigr) [\psi] | \rho_{g^{-1}} \psi'}
	\end{equation}
	for all $\psi' \in \mathcal{H}_{g.q,\lambda}^{(d)}$.
	By Rem.~\ref{rem:char_nabla}, this shows that $\nabla_{X} \bigl( \rho_{g} \psi \bigr) = \rho_{g} \Bigl( \nabla_{g_{*}^{-1} X} [\psi] \Bigr)$.
\end{proof}
	
Recall now that for every integer $d$ there exists an $\Sp(1)$-equivariant Hermitian line bundle of each degree $d$ with compatible connection over $\mathbb{C}P^{1}$, unique up to isomorphism.
This can be characterised as the holomorphic line bundle $\mathcal{O}(d)$ together with the standard Hermitian metric and its corresponding Chern connection.
Alternatively, it can also be described as the quotient of an appropriate line bundle over $\Sp(1)$ under the identification $\mathbb{C}P^{1} \simeq \Sp(1) \slash \U(1)$.
More precisely, consider the $d$-th character $\chi^{(d)} \colon \mathfrak{u}(1) \to \mathbb{R}$ and its unique $\Ad_{\U(1)}$-invariant extension to $\mathfrak{sp}(1)$.
Calling $\alpha^{(d)}$ the corresponding left-invariant $1$-form on $\Sp(1)$, the connection $\dif + 2 \pi i \alpha^{(d)}$ on $\Sp(1) \times \mathbb{C}$ is then invariant under the actions
\begin{equation}
	A \cdot (x,z) \coloneqq (Ax,z)
	\qquad \text{and} \qquad
	(x,z) \cdot h \coloneqq (xh,h^{-d}z)
\end{equation}
for $A \in \Sp(1)$ and $h \in \U(1)$.
Furthermore, the right $\U(1)$-action is by construction horizontal for this connection.
Therefore, the latter descends to to a metric and $\Sp(1)$-equivariant connection on $\bigl(\Sp(1) \times \mathbb{C}\bigr) \slash \U(1) \to \Sp(1) \slash \U(1) \simeq \mathbb{C}P^{1}$.

Uniqueness can be established by noticing that the difference of two such line bundles comes with a connection whose curvature is $\Sp(1)$-invariant and vanishes in cohomology, and is therefore zero.
The space of flat sections is then a $1$-dimensional $\Sp(1)$-representation, so that choosing one unit element in this space gives an isomorphism of the line bundles intertwining the Hermitian structures and connections.

We will refer to this object as $\mathcal{L}^{d}$.

For each $d \in \mathbb{Z}$ and $\lambda \in \Lambda^{(d)}$, call $m_{\lambda}^{(d)}$ the multiplicity of $V_{\lambda}$ in $\mathcal{H}^{(d)}_{q}$.%
\footnote{The integer $m^{(d)}$ is independent of $q \in \mathbb{C}P^1$ (cf. the previous footnote).}
Finally, call $\underline{V_{\lambda}} \to \mathbb{C}P^{1}$ the trivial Hermitian bundle with fibre $V_{\lambda}$ with the trivial connection $\nabla^{\triv}$ and $\Sp(1)$ acting on it trivially on the fibres.

\begin{thm}
\label{thm:CurvaturePF}
	Suppose an integer $d$ and a dominant weight $\lambda$ of $G_{0}$ are fixed.
	Suppose that, for some $q \in \mathbb{C}P^{1}$, the corresponding isotypical component in the quantum Hilbert space $\mathcal{H}_{q}$ has finite multiplicity $m_{\lambda}^{(d)}$.
	Consider the collection $\mathcal{H}_{\lambda}^{(d)}$ of corresponding isotypical components, and suppose it forms a Banach submanifold of $\mathbb{C}P^{1} \times \LL(M,L)$ acted on smoothly by $G$.
	Then $\mathcal{H}_{\lambda}^{(d)}$ is a Hermitian vector bundle over $\mathbb{C}P^{1}$ and there exists a $G$-invariant isomorphism
	\begin{equation}
		\mathcal{H}_{\lambda}^{(d)} \simeq \bigl( \mathcal{L}^{d} \otimes \underline{V_{\lambda}} \bigr)^{\oplus m_{\lambda}^{(d)}}
	\end{equation}
	of Hermitian vector bundles which intertwines the covariant derivative operators $\nabla^{\mathcal{H}_{\lambda}^{(d)}}$ of Def.~\ref{def:L^2_connection} with the natural connection on the right-hand side.
\end{thm}

\begin{proof}
	Call for simplicity $m \coloneqq m_{\lambda}^{(d)}$, fix $q \in \mathbb{C}P^{1}$, and identify $T_{q}$ with $\U(1)$ by the orientation defined by $q$.
	Consider on $\Sp(1)$ the trivial vector bundle $\Sp(1) \times V_{\lambda}^{\oplus m}$ with the left and right actions
	\begin{equation}
		(A,g) \cdot (x,v) \coloneqq (Ax,gv)
		\qquad \text{and} \qquad
		(x,v) \cdot h = (xh,h^{-d}v)
	\end{equation}
	of $\Sp(1) \times G_{0}$ and $\U(1)$, respectively.
	Choose an isomorphism $\varphi \colon V_{\lambda}^{(\oplus m)} \to \mathcal{H}_{q,\lambda}^{(d)}$ as $G_{0}$-modules and define
	\begin{equation}
		\Phi \colon \Sp(1) \times V_{\lambda}^{\oplus m} \to \mathcal{H}_{\lambda}^{(d)} \, ,
		\qquad
		\Phi \colon (x,v) \mapsto \rho^{\mathcal{H}}(x) \bigl(\varphi(v)\bigr) \, .
	\end{equation}
	It is clear by construction that $\Phi$ is invariant under the right $\U(1)$-action and intertwines the $\Sp(1) \times G_{0}$-actions.
	It is also a surjective smooth map covering the projection $\pi \colon \Sp(1) \to \mathbb{C}P^{1}$, $\pi(x) \coloneqq xq$ and restricts fibre-wise to unitary isomorphisms.
	It follows that $\Phi$ is a submersion, and therefore the induced bijection
	\begin{equation}
		\label{eq:iso_below}
		\Bigl( \Sp(1) \times V_{\lambda}^{\oplus m} \Bigr) \Bigr \slash \U(1) \longrightarrow \mathcal{H}_{\lambda}^{(d)}
	\end{equation}
	is a diffeomorphism, thus showing that $\mathcal{H}_{\lambda}^{(d)}$ is a vector bundle as claimed.
	It then follows from Prop.~\ref{thm:invariant_metric_connection} that $\nabla^{\mathcal{H}_{\lambda}^{(d)}}$ is a Hermitian $G$-invariant connection.
		
	The map $\Phi$ may also be regarded as a unitary isomorphism
	\begin{equation}
		\label{eq:lifted_iso}
		\Sp(1) \times V_{\lambda}^{\oplus m} \simeq \pi^{*} \mathcal{H}_{\lambda}^{(d)} \, .
	\end{equation}
	Both sides come with $\Sp(1) \times G_{0}$- and $\U(1)$-invariant Hermitian connections, both making the right $\U(1)$-action horizontal.
	Such a connection, however, is uniquely characterised by these properties.
	Indeed, left $\Sp(1)$-invariance implies that such a connection is determined by the potential over any element of $\Sp(1)$.
	On the other hand, combining the left and right $\U(1)$-invariance shows that the operation of lifting elements of $\Tan_{\Id} \Sp(1) \simeq \mathfrak{sp}(1)$ horizontally is $\Ad_{\U(1)}$-equivariant.
	The condition that the right $\U(1)$-action be horizontal, moreover, determines the lifts of vectors in $\mathfrak{u}(1)$, and therefore of those in $\mathfrak{sp}(1)$ by $\Ad_{U}(1)$-invariance.
	We conclude that the isomorphism~\eqref{eq:lifted_iso} also identifies the connections on the two bundles, which is to say that the isomorphism~\eqref{eq:iso_below} is also horizontal.
	The left-hand side of~\eqref{eq:iso_below}, however, is clearly isomorphic to $\mathcal{L}^{d} \otimes \underline{V_{\lambda}}^{\oplus m}$.
	Finally, since the kernel of the covering map $\Sp(1) \times G_{0} \to G$ acts trivially on the right-hand side, it follows that the group action on the left-hand side descends to $G$.
\end{proof}

\begin{rem}
\label{rem:alternative_definition}
    Thm.~\ref{thm:CurvaturePF} yields an alternative definition of the bundles of isotypical components, without smoothness assumptions.
    Indeed, a map $\Phi$ constructed as above uniquely defines a smooth structure on $\mathcal{H}_{\lambda}^{(d)}$ making it a vector bundle with an isomorphism with $\mathcal{L}^{d} \otimes \underline{V_{\lambda}}^{m}$, and therefore inducing also a connection with the desired properties.
    Given that the only ambiguity in the construction of $\Phi$ lies in the choice of $\varphi$, any two such maps are related by pre-composition with a $G_{0}$-invariant automorphism of $V_{\lambda}^{\oplus m}$.
    Since this operation preserves the structure on $\Sp(1) \times V_{\lambda}^{\oplus m}$, the two choices induce the same data on $\mathcal{H}_{\lambda}^{(d)}$.

    This yields finite-rank smooth $G$-equivariant Hermitian vector bundles over the Riemann sphere, equipped with Hermitian connections, defined from the combinatorial data of the multiplicities of $\mathcal{H}_{q}$ as a representation, as long as the main assumption that the $\mathcal{H}_{q,\lambda}^{(d)}$'s be finite-dimensional is verified.
\end{rem}

Together with Rem.~\ref{rem:rho=d_lambda}, the content of this section proves Thm.~\ref{thm:main}.

\subsection{Quantum super Hilbert spaces and unitary representations}
\label{sec:super_spaces}

We now call $H^{(d)}_{\lambda}$ the super vector space obtained by taking the holomorphic cohomology of the bundles of isotypical components:
\begin{equation}
    H^{(d)}_{\lambda} \coloneqq H^* \bigl( \mathbb{C}P^1, \mathcal{H}^{d}_{\lambda} \bigr) \, .
\end{equation}
By Rem.~\ref{rem:rho=d_lambda}, the above is equivalent to the space we called $H^{(\rho)}$ in Thm.~\ref{thm:super_reps} in the introduction.
Since $\mathcal{H}_{\lambda}^{(d)}$ is Hermitian and $\mathbb{C}P^{1}$ is Kähler, the $\LL$-pairing on harmonic representatives gives each of the above has a natural Hermitian structure.

If $W^{(d)} =  W_+^{(d)} \oplus W_-^{(d)}$ is the unitary super $\Sp(1)$-representation defined by
\begin{equation}
    W_+^{(d)}  \coloneqq H^0 \bigl( \mathbb{C}P^1, \mathcal{L}^{d} \bigr) \, , \qquad W_-^{(d)}
	\coloneqq H^1 \bigl( \mathbb{C}P^1, \mathcal{L}^{d} \bigr) \, ,
\end{equation}
then $H^{(d)}_{\lambda} \simeq W^{(d)} \otimes V_\lambda^{\oplus m_{\lambda}^{(d)}}$ as super $G$-representations, regarding $V_{\lambda}$ as of pure even degree.
Moreover, $\dim W_{+}^{(d)}$ is equal to $d+1$ if $d \geq 0$ and $0$ otherwise, while similarly $\dim W_{-}^{(d)}$ vanishes for $d \geq 0$ and is equal to $-d-1$ otherwise.

Finally consider the nested $\LL$-completed orthogonal direct sums
\begin{equation}
\label{eq:direct_sum_superspaces}
    H \coloneqq \overline{\bigoplus_{d \in \mathbb{Z}} H^{(d)}} \, , \qquad H^{(d)} \coloneqq \overline{\bigoplus_{\lambda \in \Lambda^{(d)}} H_\lambda^{(d)}} \, .
\end{equation}

The discussion of this section finally provides a model for the quantisation of $(M,g,I,J,K)$ and its $G$-action as a $G$-representation, and proves Thm.~\ref{thm:super_reps} from the introduction.

\subsection{Finite-rank conditions}
\label{sec:finite_rank_conditions}

We shall now consider conditions which entail finite-dimensionality for the isotypical components of \S~\ref{sec:The_grading}.

In general, if $K$ is a compact Lie group with Lie algebra $\mathfrak{k} = \Lie(K)$, acting on a Kähler manifold $X$ with a lifted $K$-action on a pre-quantum line bundle $(L, \nabla)$, there is a natural moment map $\mu \colon X \to \mathfrak{k}^{\vee}$ defined by Kostant's formula
\begin{equation}
\label{eq:Kostant_moment}
    2 \pi i \braket{\mu, \xi} \frac{\partial}{\partial \theta} = \xi_{X}^{H} - \xi_{L}
\end{equation}
for every $\xi \in \mathfrak{k}$, where $\xi_{L}$ is the vector field corresponding to $\xi$ on $L$, $\xi_{X}^{H}$ the one on $X$ lifted horizontally, and $\frac{\partial}{\partial \theta}$ is the fibre-wise ``angular'' vector field.
In this setup, we will make use of the following version of the general principle that ``quantisation commutes with reduction''.

\begin{thm}[\cite{GS,Sja95}]
\label{thm:[Q,R]=0}
	In the setup above, if the $K$-action extends holomorphically to the complexified group $K^{\mathbb{C}}$, and if the moment map~\eqref{eq:Kostant_moment} is proper, then for every dominant weight $\gamma$ of $K$ there is an identification
	\begin{equation}
		\Hom_{K} \bigl( V_{\gamma}, H^{0}(X, L) \bigr) \simeq H^{0} \left( X_{\gamma}, L_{\gamma} \right) \, ,
	\end{equation}
	where $V_{\gamma}$ denotes a simple $K$-module of highest weight $\gamma$, $X_{\gamma} = X \sslash_{\gamma} K$ is the symplectic reduction of $X$ at level $\gamma$, and $L_{\gamma}$ is the induced ($V$-)bundle on $X_{\gamma}$.
\end{thm}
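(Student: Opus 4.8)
The statement to prove is the $[Q,R]=0$ theorem attributed to Guillemin--Sternberg and Sjamaar: under properness of the Kostant moment map and holomorphic extension of the action to $K^{\mathbb{C}}$, the multiplicity space $\Hom_{K}(V_{\gamma}, H^{0}(X,L))$ is canonically identified with $H^{0}(X_{\gamma}, L_{\gamma})$ of the symplectic quotient at level $\gamma$. Since this is cited as a known result (\cite{GS,Sja95}), the plan is to \emph{recall} the proof architecture rather than reprove it from scratch, organised as a reduction to the case $\gamma = 0$ via the standard shifting trick.

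First I would handle the abelian/shifting reduction. For a dominant weight $\gamma$, form the product Kähler manifold $X \times \overline{\mathcal{O}_{\gamma}}$, where $\overline{\mathcal{O}_{\gamma}}$ is the coadjoint orbit through $-\gamma$ with its opposite (anti-invariant) Kähler structure, prequantised by the line bundle $L_{-\gamma}^{\mathcal O}$ whose holomorphic sections realise $V_{\gamma}^{*}$ by Borel--Weil. The diagonal $K$-action on $X \times \overline{\mathcal{O}_{\gamma}}$ has moment map $\mu - \gamma$ (suitably interpreted), which is proper because $\mu$ is proper and the orbit is compact, and the action still extends to $K^{\mathbb{C}}$. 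One then has a canonical isomorphism of $K$-invariants
\begin{equation}
    \Hom_{K}\bigl(V_{\gamma}, H^{0}(X,L)\bigr) \simeq \bigl(H^{0}(X,L)\otimes V_{\gamma}^{*}\bigr)^{K} \simeq H^{0}\bigl(X\times\overline{\mathcal{O}_{\gamma}}, L\boxtimes L^{\mathcal O}_{-\gamma}\bigr)^{K} \, ,
\end{equation}
using that holomorphic cohomology of an external tensor product on a product splits (Künneth) and that $K$ is reductive so taking $K$-invariants is exact. This reduces the claim to the level-zero statement, and identifies $X_{\gamma} = X\sslash_{\gamma}K$ with $(X\times\overline{\mathcal{O}_{\gamma}})\sslash_{0}K$ together with the induced bundles, so $H^{0}(X_{\gamma},L_{\gamma})$ matches on the geometric side.

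Second, for the level-zero case I would invoke the core analytic input: the Kähler quotient $X\sslash_{0}K = \mu^{-1}(0)/K$ is, by the Kempf--Ness theorem in the proper setting, biholomorphic to the GIT-type quotient $X^{ss}/\!\!/K^{\mathbb{C}}$, where properness of $\mu$ guarantees the semistable locus is nonempty and the quotient is a (possibly singular, but Kähler) complex analytic space with an induced orbifold/V-line bundle $L_{0}$. The restriction-of-sections map $H^{0}(X,L)^{K} = H^{0}(X,L)^{K^{\mathbb{C}}} \to H^{0}(X^{ss},L) \to H^{0}(X^{ss}/\!\!/K^{\mathbb{C}}, L_{0})$ is then shown to be an isomorphism: injectivity follows because $X^{ss}$ is dense and sections are holomorphic, while surjectivity uses that every invariant section over $X^{ss}$ extends across the unstable locus by a Hartogs/normality argument (properness of $\mu$ forces the unstable set to be of codimension $\geq 1$, in fact it has no effect after passing to invariants), together with an $L^{2}$-estimate controlling growth. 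This is exactly where the properness hypothesis does the essential work.

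The main obstacle — and the reason the original proofs are substantial — is the singularity of $X_{\gamma}$ when $0$ (resp.\ $\gamma$) is not a regular value: one must make sense of $H^{0}(X_{\gamma},L_{\gamma})$ on a singular symplectic quotient, which Sjamaar does via partial desingularisation and stratified Kähler geometry, and one must verify that the multiplicity-space isomorphism is compatible with this. In the present paper, however, the theorem is used only as a black box to produce finite-dimensionality of the isotypical components $\mathcal{H}^{(d)}_{\lambda,q}$; accordingly I would not reproduce Sjamaar's desingularisation, but simply cite \cite{GS,Sja95} for the singular case and, where $\gamma$ is a regular value, give the short Kempf--Ness plus Hartogs argument sketched above, noting that in all of the applications in \S\S~\ref{sec:examples}--\ref{sec:further} the relevant reductions are smooth (or orbifolds), so the elementary version suffices.
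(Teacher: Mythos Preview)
The paper does not actually prove this statement: it is stated as a theorem with attribution \cite{GS,Sja95} and followed only by a paragraph of historical remarks listing subsequent generalisations (Meinrenken, Meinrenken--Sjamaar, Vergne, Ma, Ma--Zhang, Hochs--Song). There is no proof to compare against.

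Your sketch is a reasonable outline of the standard Guillemin--Sternberg/Sjamaar argument --- the shifting trick to reduce to level zero, then Kempf--Ness plus a Hartogs-type extension --- and you correctly identify the singular case as the point where Sjamaar's work is needed. One small imprecision: the surjectivity step at level zero is not literally a codimension argument on the unstable locus (which can have codimension one); rather, in Sjamaar's setting it goes via the identification of invariant sections with sections on the categorical quotient and an $L^{2}$-boundedness argument. But since the paper treats the theorem purely as a black box (exactly as you anticipated in your final paragraph), none of this detail is required here, and your proposal already goes well beyond what the paper supplies.
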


This result was first established by Guillemin and Sternberg~\cite{GS} in the case $X$ is compact, with additional regularity conditions, and then extended by Sjamaar~\cite{Sja95}.
The statement has been subsequently generalised in various works including those of Meinrenken~\cite{Mei96a,Mei96}, Meinrenken--Sjamaar~\cite{MS99}, Vergne~\cite{Vergne1,Vergne2}, Ma~\cite{Ma}, Ma--Zhang~\cite{MaZhang}, and Hochs--Song~\cite{HS}.

We emphasise that this formulation of ``quantisation commutes with reduction'' requires no assumptions on $\gamma$ being a regular value or the $K$-action being free on $\mu^{-1} (\gamma)$.
In the statement of Thm.~\ref{thm:[Q,R]=0}, $X_{\gamma}$ and $L_{\gamma}$ are regarded as a complex analytic space and a coherent sheaf, respectively.
See~\cite{Sja95} for further detail.

\vspace{5pt}

Returning to our setting, for any fixed $q$ and Lie subgroup $S \subset G_{q}$ we have a moment map
\begin{equation}
	\mu_{S} \colon M \to \Lie(S)^{\vee}
\end{equation}
given by Kostant's formula.
Then Thm.~\ref{thm:[Q,R]=0} yields the following.

\begin{thm}
\label{thm:fdmm}
	Suppose that $q \in \mathbb{C}P^{1}$ and $S \subseteq G_{q}$ is a (connected) Lie subgroup, and call $\mu_{S}$ the Kostant moment map of the $S$-action on $M_{q}$.
	Assume moreover that $\mu_{S}$ is proper, and that the $S$-action has a holomorphic extension to the complexified group $S^{\mathbb{C}}$ on $M_{q}$.
    Then every isotypical component in $\mathcal{H}_{q}$ as an $S$-representation has finite multiplicity.
\end{thm}

\begin{proof}
	Properness of the moment map implies that, for any dominant weight $\gamma$ of $S$, the symplectic reduction $M \sslash_{\gamma} S$ is a \emph{compact} complex analytic space.
	On the other hand, $L_{\gamma}$ is a coherent sheaf on it by~\cite[\textsection~2.2]{Sja95}, and by compactness the space of sections is finite-dimensional~\cite{Car53}.
    
	It follows then from~\ref{thm:[Q,R]=0} that the irreducible representation of $S$ of highest weight $\gamma$ has finite multiplicity inside $H^{0}(M_{q}, L_{q})$, so \emph{a fortiori} inside $\mathcal{H}_{q}$.
\end{proof}

\begin{rem}
\label{rem:Hartogs}
    Another way to ensure finite-dimensionality is to assume there are compactifications of the symplectic reductions, with rational singularities and boundary of (complex) codimension at least two; then Hartogs's theorem applies on the reduction (see e.g.~\cite{Thomason} for such generalizations, and cf. Thm.~\ref{thm:formulaHHprime}).
\end{rem}

As briefly noted in the introduction, another approach to controlling the dimension of the isotypical components is offered by the results of~\cite{Wu}.
Indeed, the cited work introduces a notion of meromorphicity for certain group actions which, under appropriate conditions (see assumption~2.14 in op.~cit.), ensures finite-dimensionality.

\subsection{Rank-generating series and localisation formul\ae{}}
\label{sec:gen_series}

If either $\mathcal{H}^{(d)}$, $\mathcal{H}_{a}^{(d)}$ or $\mathcal{H}_{\lambda}^{(d)}$ have finite rank, we consider the (formal) generating series
\begin{equation}
\label{eq:H-Char}
    H(t) = \sum_{d} \rk \bigl( \mathcal{H}^{(d)} \bigr) \cdot  t^d \, ,
\end{equation}
and 
\begin{equation}
\label{eq:H'-Char}
    H' \bigl( t,\widetilde{t} \, \bigr) = \sum_{d,a} \rk \bigl(\mathcal{H}_{a}^{(d)} \bigr) \cdot t^d \widetilde{t}^a \, ,
\end{equation}
as well as
\begin{equation}
\label{eq:G-Char}
    G \bigl( t,\widetilde{t} \, \bigr) = \sum_{d\in \mathbb{Z}} \sum_{\lambda\in \Lambda^{(d)}}  \ m^{(d)}_\lambda \cdot t^d \widetilde{t}^\lambda \, .
\end{equation}

Note that if $\mathcal{H}_{a}^{(d)}$ and $\mathcal{H}_{\lambda}^{(d)}$ are both finite-rank then~\eqref{eq:H'-Char} can be obtained from~\eqref{eq:G-Char} via the substitution $\widetilde{t}^\lambda \mapsto \chi_\lambda \bigl( \, \widetilde{t} \, \bigr)$, where
\begin{equation}
    \chi_\lambda \bigl( \, \widetilde{t} \, \bigr) = \sum_{a\in E_\lambda} n^{(\lambda)}_a \cdot \widetilde{t}^a \, ,
\end{equation}
and where $E_\lambda$ is the set of weights of $V_\lambda$---with multiplicities $n^{(\lambda)}_a \in \mathbb{Z}_{\geq 0}$. 

If in particular $G_{0}$ is semisimple then the Weyl Character formula yields 
\begin{equation}
    \chi_\lambda \bigl( \, \widetilde{t} \, \bigr) =
    \frac{{\displaystyle \sum_{w\in W} \epsilon(w) \widetilde{t}^{w(\lambda + \rho)}}}
    {{\displaystyle \sum_{w\in W} \epsilon(w) \widetilde{t}^{w(\rho)}}} \, ,
\end{equation}
where $W = N(T) \big\slash T$ is the Weyl group and $\rho \in \mathfrak{t}^{\vee}$ the half-sum of positive roots.

Conversely~\eqref{eq:G-Char} can be recovered from~\eqref{eq:H'-Char} (when both are defined) as follows.
Fix $d \in \mathbb{Z}$ and let $H_{d} (\widetilde{t})$ be the coefficient of $t^{d}$ in~\eqref{eq:H'-Char}.
Let $\lambda$ be maximal among the weights such that $\widetilde{t}^{\lambda}$ appears in $H_{d}(\widetilde{t})$.
In particular, the weight $\lambda^{(0)}_{d}$ can only appear in an irreducible component of $\mathcal{H}^{(d)}_{q}$ (as a $G_{0}$-module) if it is the highest.
Therefore, the coefficient of $\widetilde{t}^{\lambda}$ in $H_{d}(\widetilde{t})$ is equal to $m_{\lambda}^{(d)}$.
One may now consider $H_{d}(\widetilde{t}) - m_{\lambda}^{(d)} \chi_{\lambda} (\widetilde{t})$ and repeat the procedure inductively.
Since each step strictly decreases one of the maximal weights the process terminates---exactly when the polynomial vanishes.
This results in a decomposition 
\begin{equation}
	H_{d} (\widetilde{t}) = \sum_{\lambda \in \Lambda^{(d)}} m_{\lambda}^{(d)} \chi_{\lambda} (\widetilde{t}) \, ,
\end{equation}
recovering all multiplicities and ultimately~\eqref{eq:G-Char}.

\vspace{5pt}

Further the generating series~\eqref{eq:H-Char},~\eqref{eq:H'-Char}, and~\eqref{eq:G-Char} can sometimes be computed by localisation formul\ae{}. 
We refer to~\cite{HW} for general results, and we review here the simpler versions used in what follows.

Suppose the action of $T_q$ on $M_q$ has a finite number of fixed points $\abs{M_q} \subseteq M_q$, and let $\overline{R}(T_q)$ be the formal completion of the character ring $R(T_q)$ of $T_q$---i.e. $\overline{R}(T_q) \simeq \mathbb{Z} \llbracket t^{\pm 1} \rrbracket$ canonically.

Since the fixed points $p \in \, \abs{M_q}$ are isolated we see that $\Lambda_{-1}(\T_p M_q) \in \overline{R}(T_q)$ is invertible.
Suppose now we have a decomposition
\begin{equation}
	H^i(M_q, L_q)  =  \overline{\bigoplus_{d \in \mathbb{Z}} H^i(M_q,L_q)^{(d)}} \, ,
\end{equation}
such that $T_q$ acts on $H^i(M_q,L_q)^{(d)}$ via the $d$'th power of the standard representation, and such that the spaces $H^i(M_q,L_q)^{(d)}$ are finite-dimensional.

\begin{prop}[\cite{Braverman,HW}]
    The following formula holds:
    \begin{equation}
        \sum_{i=0}^{2n} (-1)^i\dim H^i(M_q,L_q)^{(d)} t^d = \sum_{p \in \, \abs{M_q} } \frac{L_{q,p}}{\Lambda_{-1}(T_pM_q)} \, .
    \end{equation}
\end{prop}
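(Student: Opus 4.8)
The plan is to prove the final proposition as a standard instance of the Atiyah--Bott/Lefschetz fixed-point formula, or equivalently the holomorphic equivariant Riemann--Roch theorem, applied to the $T_q$-equivariant line bundle $L_q \to M_q$ and then extracted grading-wise. First I would note that the Euler characteristic $\sum_i (-1)^i H^i(M_q, L_q)$ is naturally an element of $\overline{R}(T_q)$ once we know each $H^i(M_q,L_q)^{(d)}$ is finite-dimensional and that the decomposition by powers of the standard character is exhaustive: indeed the right-hand side of the claimed identity is \emph{a priori} only a formal Laurent series in $t$, and the content is that it represents the equivariant index. The localisation theorem in equivariant $K$-theory (Atiyah--Segal, or in the holomorphic category Atiyah--Bott) states that, after inverting the classes $\Lambda_{-1}(\T_pM_q)$ of the (isolated) fixed points, the equivariant index localises to
\begin{equation}
	\sum_{i=0}^{2n} (-1)^i \bigl[ H^i(M_q,L_q) \bigr] = \sum_{p \in \, \abs{M_q}} \frac{L_{q,p}}{\Lambda_{-1}(\T_pM_q)} \in \overline{R}(T_q) \, ,
\end{equation}
where $L_{q,p}$ is the one-dimensional $T_q$-representation given by the fibre of $L_q$ at $p$, and $\Lambda_{-1}(\T_pM_q) = \sum_j (-1)^j \Lambda^j \T_p M_q$ is invertible precisely because no weight of the isotropy representation on $\T_pM_q$ is trivial (the fixed point being isolated). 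This is exactly the statement attributed to the cited references~\cite{Braverman,HW}, so in the write-up I would simply invoke it.

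The remaining step is bookkeeping: identify both sides under the canonical isomorphism $\overline{R}(T_q) \simeq \mathbb{Z}\llbracket t^{\pm 1}\rrbracket$. On the left, the class of the virtual representation $\sum_i (-1)^i H^i(M_q,L_q)$ is, by the given grading, the series $\sum_d \bigl( \sum_i (-1)^i \dim H^i(M_q,L_q)^{(d)} \bigr) t^d$, which is a well-defined element of $\mathbb{Z}\llbracket t^{\pm1}\rrbracket$ once each coefficient is finite (that is exactly the finiteness hypothesis in the statement). On the right, each summand $L_{q,p}/\Lambda_{-1}(\T_pM_q)$ is expanded as a Laurent series in $t$ using the isotropy weights at $p$, and the sum over the finitely many fixed points is again a formal Laurent series. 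Equating coefficients of $t^d$ then gives the displayed scalar identity. I would phrase the argument so that the first equality (of virtual characters) is the deep input and the second (passage to $t$-coefficients) is the trivial unwinding of definitions.

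The main obstacle, and the only point requiring care, is ensuring that the equivariant index is \emph{defined} in the non-compact setting: $M_q$ need not be compact, so $H^i(M_q,L_q)$ may be infinite-dimensional as a plain vector space, and one must justify that it is nevertheless a legitimate element of $\overline{R}(T_q)$, i.e. that every isotypical piece is finite-dimensional and that the cohomology vanishes in high enough degree (or that the alternating sum makes sense). Here this is supplied by hypothesis---we are assuming the decomposition exists with finite-dimensional graded pieces---so the localisation formula of~\cite{Braverman,HW}, which is precisely designed for this meromorphic/holomorphic Lefschetz setting with isolated fixed points, applies verbatim. Hence the proof is essentially a citation plus the identification of $\overline{R}(T_q)$ with $\mathbb{Z}\llbracket t^{\pm1}\rrbracket$, and I would keep it to a few lines.
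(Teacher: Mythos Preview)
Your proposal is correct and matches the paper's treatment: the proposition is stated there without proof, simply as a citation of~\cite{Braverman,HW}, and your write-up is precisely the unpacking of that citation---invoke the holomorphic Lefschetz/localisation formula for isolated fixed points, note that the finiteness hypothesis makes the equivariant index a well-defined element of $\overline{R}(T_q)$, and read off coefficients in $t$. There is nothing to add.
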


Hence if $H^i(M_q,L_q) = (0) $ for $i > 0$ then simply
\begin{equation} 
\label{eq:Localize}
    H(t) = \sum_{p \in \, \abs{M_q} } \frac{L_{q,p}}{\Lambda_{-1}(T_pM_q)} \, .
\end{equation}

Considering the action of $T'_q = T_{q} \cdot T$ on $M_q$ we get an analogous result, provided $T'_q$ has finitely many fixed points and all spaces $H^i(M_q,L_q)^{(d)}_{a,q}$ are finite-dimensional---and interpreting the right-hand side as an element of $\overline{R}(T) \simeq \mathbb{Z}\llbracket t^{\pm 1}, \widetilde{t}^{\pm 1}_i \rrbracket$. 
In particular
\begin{equation} 
\label{eq:Localizeprime}
    H' \bigl( t,\widetilde{t} \, \bigr) = \sum_{p \in \, \abs{M_q} } \frac{L_{q,p}}{\Lambda_{-1}(T_pM_q)} \, .
\end{equation}

Now recall that if $M_q$ is a Stein space, or has the structure of an affine scheme, then Cartan's theorem yields the vanishing of higher cohomology groups~\cite{Cartan}.
Thus putting together the previous results we have established the following.

\begin{thm}
\label{thm:formulaHHprime}
    Suppose there exists $q \in \mathbb{C}P^1$ such that $M_q$ is a Stein space, or has the structure of an affine scheme, and that the $T_q$-action (resp. $T_q'$-action) has finitely many fixed points.
    Assume further that one of the following holds:
    \begin{itemize}
        \item there is a proper moment map for the $T_q$-action (resp. $T'_q$-action);
        
        \item there exists a compactification of the symplectic reductions with rational singularities, with boundary of codimension at least two (cf. Rem.~\ref{rem:Hartogs}).
    \end{itemize}
    Then the family $\mathcal{H}^{(d)}$ (resp. $\mathcal{H}_a^{(d)}$) has finite rank, and the associated localisation formula~\eqref{eq:Localize} (resp.~\eqref{eq:Localizeprime}) holds for the rank-generating series~\eqref{eq:H-Char} (resp.~\eqref{eq:H'-Char}).
\end{thm}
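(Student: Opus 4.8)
The plan is to assemble the statement from three ingredients already in place: the finiteness results of \S~\ref{sec:finite_rank_conditions}, Cartan's vanishing on Stein (resp.\ affine) spaces, and the localisation Proposition recalled above. \emph{Finite rank.} Under the first alternative---a proper moment map for the $T_q$-action (resp.\ the $T'_q$-action)---finiteness is precisely Thm.~\ref{thm:fdmm}, once one has a holomorphic complexification of the torus in question; this is supplied by the hypothesis that $M_q$ is Stein or affine, along which the circle $T_q$ (resp.\ the compact torus $T'_q$) extends to $\mathbb{C}^{\times}$ (resp.\ to $(\mathbb{C}^{\times})^{\dim T'_q}$) acting holomorphically, resp.\ algebraically. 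Under the second alternative I would instead run the Hartogs argument of Rem.~\ref{rem:Hartogs}: a compactification of each symplectic reduction with rational singularities and boundary of codimension $\geq 2$ forces holomorphic sections of the reduced bundle to extend across the boundary, so its space of sections is finite-dimensional, and the quantisation--reduction correspondence then bounds the multiplicity of each dominant weight inside $H^{0}(M_q,L_q)$. In either case every isotypical piece $H^{0}(M_q,L_q)^{(d)}$ (resp.\ $H^{0}(M_q,L_q)^{(d)}_{a,q}$) is finite-dimensional, i.e.\ $\mathcal{H}^{(d)}$ (resp.\ $\mathcal{H}^{(d)}_{a}$) has finite rank.

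\emph{Cohomology vanishing.} Since $M_q$ is a Stein complex manifold, or an affine scheme, Cartan's Theorem~B (resp.\ its algebraic counterpart) gives $H^{i}(M_q,L_q) = (0)$ for $i > 0$~\cite{Cartan}. Grading by the $T_q$- (resp.\ $T'_q$-)action, this says that each $H^{i}(M_q,L_q)^{(d)}$ (resp.\ $H^{i}(M_q,L_q)^{(d)}_{a,q}$) is finite-dimensional and vanishes for $i > 0$, so the hypotheses of the localisation Proposition are met and its left-hand side collapses to $\sum_{d} \dim H^{0}(M_q,L_q)^{(d)}\, t^{d} = \sum_{d} \rk\bigl( \mathcal{H}^{(d)} \bigr)\, t^{d} = H(t)$ of~\eqref{eq:H-Char} (resp.\ to $H'(t,\widetilde{t}\,)$ of~\eqref{eq:H'-Char}).

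\emph{Localisation.} As the $T_q$-action on $M_q$ has isolated fixed points, $\Lambda_{-1}(\T_p M_q)$ is invertible in the formal completion $\overline{R}(T_q) \simeq \mathbb{Z}\llbracket t^{\pm 1} \rrbracket$ for every $p \in \abs{M_q}$, so the right-hand side of the Proposition is a well-defined element there; comparison with the previous paragraph yields~\eqref{eq:Localize}. The $T'_q$-case is identical, now inside $\overline{R}(T) \simeq \mathbb{Z}\llbracket t^{\pm 1}, \widetilde{t}_i^{\pm 1} \rrbracket$, and yields~\eqref{eq:Localizeprime}.

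\emph{Main obstacle.} The genuinely delicate points are the complexification input and the transfer of the localisation formula to a non-compact ambient space: one must verify that the relevant torus action really complexifies holomorphically on $M_q$ so that Thm.~\ref{thm:[Q,R]=0} is applicable, and that the localisation formula of~\cite{Braverman,HW}, originally phrased for compact targets, persists here---which it does precisely because the weight grading renders each graded summand finite-dimensional, so that the two sides may be compared coefficientwise in the formal completion of the character ring. Once these are in place the proof closes.
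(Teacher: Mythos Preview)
Your proposal is correct and follows essentially the same route as the paper, which does not give a separate proof but simply states ``putting together the previous results we have established the following,'' meaning Thm.~\ref{thm:fdmm} (or Rem.~\ref{rem:Hartogs}) for finite rank, Cartan's Theorem~B for the vanishing of higher cohomology, and the Braverman--Hochs--Wang localisation proposition for the fixed-point formula. One small caveat: your claim that the Stein/affine hypothesis alone furnishes the holomorphic complexification of the torus action is not quite right in general---this typically requires an additional input such as completeness of the gradient vector field---but the paper is equally silent on this point, so your assembly matches theirs.
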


\begin{rem}
    If the higher cohomology groups do not vanish one could replace~\eqref{eq:quantum_space} by the super space
    \begin{equation}
        \widetilde{\mathcal{H}}_q = H^{\text{even}}(M_q,L) \oplus H^{\text{odd}}(M_q,L) \, ,
    \end{equation}    
    in which case case formul\ae{}~\eqref{eq:Localize} and~\eqref{eq:Localizeprime} hold for the super representations $\widetilde{\mathcal{H}}_q$ of $T_q$ and $T'_q$. 
    (In this setup one need not assume that $M_q$ be a Stein space or an affine scheme.)
\end{rem}

\begin{rem}
	Alternatively, in the setting of~\cite{Wu}, Wu's results localisation (Thm.~3.14 in op.~cit.) yield the generating series~\eqref{eq:H-Char} and~\eqref{eq:H'-Char} by an index computation of the fixed-point locus for the $T_q$- and $T'_q$-action, respectively.
\end{rem}

\subsection{\texorpdfstring{$\Sp(1)$}{Sp(1)}-symmetric hyperk\"ahler potentials }
\label{sec:HKpotSec}

\begin{defn}
    A hyperk\"ahler potential on the hyperk\"ahler manifold $(M,g,I,J,K)$ is a smooth map $\mu \colon M \to \mathbb{R}$ such that $\omega_q = i \partial_q \overline{\partial}_q \mu$ for every $q \in \mathbb{C}P^1$.
\end{defn}

One can also use such potentials to obtain equivariant pre-quantum data, as discussed below.
Assume further that $\mu$ is $\Sp(1)$-invariant and that it generates the $T_q$-actions, i.e. $i\mu \colon M_q \rightarrow i\mathbb{R} \simeq \mathfrak{t}_q^{\vee}$ is a moment map.

In this case we consider the trivial Hermitian line bundle, and lift the $G$-action by the identity on each fibre.
Natural symplectic potentials are given by
\begin{equation}
    \theta_q = \frac12 \bigl( \overline{\partial}_q \mu - \partial_q \mu \bigr) \in \Omega^1(M) \, ,
\end{equation}
hence $\nabla_q = \dif + \frac{\theta_q}{\hslash}$ is a pre-quantum connection for all $q \in \mathbb{C}P^1$, and the resulting prequantum data are $G$-equivariant since $\mu$ is $\Sp(1)$-invariant.

Now if $\operatorname{grad}(\mu)$ is complete then each $T_{q}$-action extends holomorphically to $\mathbb{C}^{\times}$, and if in addition $\mu$ is proper then the subspaces $\mathcal{H}_q^{(d)}$ are finite-dimensional by Thm~\ref{thm:fdmm}.

\begin{prop}
	\label{prop:L2H}
	Suppose that $M$ admits a $G$-invariant hyperkähler potential $\mu$ which, for every $q \in \mathbb{C}P^{1}$, is also an $\omega_{q}$-moment map for the $T_{q}$-action.
	Assume moreover that $\mu$ is bounded below and that it has finitely many critical values.
	Then for every $q \in \mathbb{C}P^{1}$ the function $\psi_{0} \coloneqq e^{-\mu/2 \hslash}$ is square-integrable and a holomorphic frame for the pre-quantum line bundle constructed above.
\end{prop}

\begin{proof}
	Nonvanishing and holomorphicity are a straightforward consequence of the definition.
	
	On the other hand, the $\operatorname{L}^{2}$-square-norm of $\psi_{0}$ can be expressed as
	\begin{equation}
		\label{eq:push_forward_int}
		\norm{\psi_{0}}_{\operatorname{L}^{2}}^{2} = \int_{M} e^{- \frac{\mu}{\hslash}} \dif \vol = \int_{B}^{\infty} e^{-\frac{\xi}{\hslash}} \mu_{*}(\dif \vol) \, ,
	\end{equation}
	where $B \in \mathbb{R}$ is a lower bound for $\mu$ and $\mu_{*} (\dif \vol)$ the push-forward of the Liouville measure.
	By the Duistermaat--Heckman theorem~\cite{DH} the push-forward admits a density which restricts to a polynomial on every interval $I \subset \mathbb{R}$ not containing critical values for $\mu$.
	Since there are finitely many such values, \eqref{eq:push_forward_int} splits as a finite sum of converging integrals, proving the claim.
\end{proof} 

By construction, the compact torus $T_q \simeq \operatorname{U}(1)$ acts on the complex vector space of holomorphic functions on $M_q$---by (inverse) pullback.
By definition, such a function is $d$-\emph{homogeneous} if it transforms (under the $T_q$-action) in the irreducible representation corresponding to the character $z \mapsto z^d \in \operatorname{U}(1)$, where $d \in \mathbb{Z}$.
Under the assumptions of Prop.~\ref{prop:L2H} we thus get an isomorphism 
\begin{equation}
    \Psi \colon \operatorname{L}^{2}H^0(M_q, \mathcal{O}, e^{-\mu/\hslash} \dif \vol)^{(d)} \longrightarrow \mathcal{H}^{(d)}_q \, ,
\end{equation}
given by $\Psi(f) = f \psi_{0}$, where the left-hand side denotes the space of $d$-homogeneous holomorphic functions with finite $\operatorname{L}^{2}$-norm with respect to $e^{-\mu/\hslash} \dif \vol$.

\section{Examples of applications}
\label{sec:examples}

\subsection{Hyperk\"ahler vector spaces}
\label{sec:HK_vector_space}

Let $n > 0$ be integer and $V$ a real vector space of dimension $4n$. 

\begin{defn}
    A linear hyperk\"ahler structure on $V$ is a scalar product $g$ and an ordered triple $(I,J,K)$ of orthogonal automorphisms of $V$ satisfying the quaternionic identities $I^2 = J^2 = K^2 = IJK = -\Id_V$.
\end{defn}

Equivalently, a linear hyperk\"ahler structure on $V$ may be regarded as a Hermitian representation of the quaternion algebra
\begin{equation}
	\mathbb{H} = \Set{ q = d + ai + bj + ck | a,b,c,d \in \mathbb{R} } \, ,
\end{equation}
on $V$, where the quaternionic Hermitian form is
\begin{equation}
	\bm{h} \coloneqq g - i \omega_{I} - j \omega_{J} - k \omega_{K} \, ,
\end{equation}
with
\begin{equation}
	\omega_{\bullet} (v,w) \coloneqq g (\bullet v , w) \quad \text{for} \quad \bullet \in \Set{I, J , K} \, .
\end{equation}

It follows that $I,J,K$ are $g$-skew-symmetric, hence they span a real Lie subalgebra $\SpanIJK \subseteq \mathfrak{o}(V,g)$.

Attached to the hyperk\"ahler vector space is the group $\Sp(V,\bm{h}) \subseteq O(V,g)$ of $\mathbb{R}$-linear endomorphisms of $V$ preserving $\bm{h}$---hence $g$ and each of the forms $\omega_I, \omega_J, \omega_K$.
As above we are interested in transformations that preserve the hyperk\"ahler structure in a looser sense, but here we restrict to \emph{linear} ones:
\begin{equation}
\label{eq:linear_hk}
	\Hk(V) = \text{Hk}(V,g,I,J,K) \coloneqq \Set{ A \in O(V,g) | \Ad_A \bigl( \SpanIJK \bigr) = \SpanIJK } \, .
\end{equation}
As a subgroup of $\operatorname{O} (V, g)$, the above is compact.

\begin{rem}
\label{rem:notation}
	We are thus slightly abusing the notation from \S~\ref{sec:setup}.
	Indeed if $V$ is regarded as a smooth hyperk\"ahler manifold then the group of \emph{all} transformations preserving $g$ and $\SpanIJK$ also contains the translations, and it is in fact generated by these two kinds of transformations.
	We shall still denote this subgroup $\Hk(V)$ in the linear case to simplify the notation.
\end{rem}

\begin{rem}
	In this case the twistor space is a rank-$2n$ holomorphic vector bundle $\pi_{\mathbb{C}P^1} \colon Z \to \mathbb{C}P^1$ isomorphic to $\underline{\mathbb{C}^{2n}} \otimes \mathscr{O}(1)$ (in the straightforward generalisation of the case $n=1$ from~\cite[Ex.~2.4, p.~143]{Hit92}). 
	The family comes with a preferred global trivialisation $Z \simeq V \times \mathbb{C}P^1$ as a smooth (rank-$4n$) vector bundle, but not as a symplectic vector bundle or as a Hermitian vector bundle.
\end{rem}

\begin{lem}
\label{lem:hk_surjection_so(3)}
    There is an exact sequence of Lie groups 
    \begin{equation}
    \label{eq:short_exact_sequence}
        1 \longrightarrow \Sp(V,\bm{h}) \longrightarrow \Hk(V) \xrightarrow{\Ad}{} \SO(3) \longrightarrow 1 \, ,
    \end{equation}
    and an embedding $\sigma \colon \Sp(1) \rightarrow \Hk(V)$ such that $\Ad \circ \, \sigma \colon \Sp(1) \to \SO(3)$ is the natural surjection. 
\end{lem}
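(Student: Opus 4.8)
The plan is to establish the exact sequence by analysing the two natural homomorphisms out of $\Hk(V)$ and then produce the splitting $\sigma$ explicitly from the quaternionic structure. First I would observe that $\Ad \colon \Hk(V) \to \SO(3)$ is well-defined: the adjoint action of $A \in O(V,g)$ on $\End(V)$ preserves the three-dimensional subspace $\SpanIJK$ by definition of $\Hk(V)$, and since $A$ is orthogonal, $\Ad_A$ preserves the restriction of the Killing-type form $(B,C) \mapsto -\tfrac14\triv(BC)$ on $\SpanIJK$ — with respect to which $(I,J,K)$ is an orthonormal basis — hence lands in $O(3)$; connectedness of $\Hk(V)$ together with the fact that $\Ad_{\Id} = \Id$ forces the image to sit in $\SO(3)$. (Alternatively, one checks directly that $\Ad_A$ preserves the orientation on $\SpanIJK$ given by $[I,J] = 2K$ and cyclic permutations, since conjugation is a Lie algebra automorphism of $\SpanIJK \simeq \mathfrak{su}(2)$, and all such automorphisms are inner, hence orientation-preserving.) The kernel of $\Ad$ is exactly the subgroup of $A$ fixing each of $I$, $J$, $K$, which is the definition of $\Sp(V,\bm h)$; this gives exactness at the left and in the middle.

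Next I would prove surjectivity of $\Ad$, and this is most efficiently done simultaneously with the construction of $\sigma$. Identify $V \simeq \mathbb{H}^n$ as a right $\mathbb{H}$-module so that the left multiplications by $i,j,k$ realise $I,J,K$ (this is precisely the reformulation as a quaternionic Hermitian representation given in the excerpt). For a unit quaternion $u \in \Sp(1) \subseteq \mathbb{H}$, define $\sigma(u) \colon V \to V$ to be right multiplication $v \mapsto v\bar u$ (or $v \mapsto vu$, with the convention chosen to make the homomorphism property work out). Then $\sigma(u)$ is $\mathbb{R}$-linear, commutes with the left $\mathbb{H}$-action only up to the inner automorphism, and a direct computation shows $\Ad_{\sigma(u)}$ acts on $\spane_{\mathbb{R}}\{i,j,k\}$ by $x \mapsto u x \bar u$, i.e. via the standard double cover $\Sp(1) \to \SO(3)$. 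Since $\sigma(u)$ preserves $\bm h$ (right multiplication by a unit quaternion is an isometry of each quaternionic-Hermitian form), we get $\sigma(u) \in \Hk(V)$, and $\Ad \circ \sigma$ is the standard surjection — in particular $\Ad$ itself is surjective, completing exactness at $\SO(3)$. The injectivity of $\sigma$ is immediate since $v\bar u = v$ for all $v$ forces $u = 1$.

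The one point requiring genuine care — and what I expect to be the main obstacle — is verifying that $\Hk(V)$ really is a Lie group (a closed subgroup of the compact group $O(V,g)$, hence a compact Lie group, as asserted parenthetically in the excerpt) and that the sequence is a sequence \emph{of Lie groups}, i.e. that $\Ad$ is smooth with the asserted image. Closedness follows because $\Hk(V)$ is cut out inside $O(V,g)$ by the condition $\Ad_A(\SpanIJK) \subseteq \SpanIJK$, a closed condition (intersection of preimages of a closed Grassmannian cell under continuous maps $A \mapsto \Ad_A(I)$, etc.); smoothness of $\Ad$ is clear since it is the restriction of the (smooth) adjoint action $A \mapsto \Ad_A|_{\SpanIJK}$ composed with a fixed linear identification $\SpanIJK \simeq \mathbb{R}^3$. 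One should also remark why the dimension count is consistent, namely that $\Sp(V,\bm h) = \Sp(n)$ has the expected dimension and $\dim \Hk(V) = \dim \Sp(n) + 3$; but this is not needed for exactness, only reassuring. A final sentence would note that the sequence need not split as Lie groups globally in a canonical way beyond the $\sigma$ just built, but $\sigma$ is all that is claimed.
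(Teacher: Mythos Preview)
Your approach is the same as the paper's: identify $V \simeq \mathbb{H}^n$ and define $\sigma$ via one-sided quaternionic multiplication, so that $\Ad \circ \sigma$ is the standard double cover $\Sp(1) \to \SO(3)$, which simultaneously gives surjectivity of $\Ad$ and the embedding. The paper's proof is two sentences; yours correctly fills in the surrounding verifications (kernel, well-definedness, Lie-group structure).

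There is one slip to fix. You write that $I,J,K$ are realised by \emph{left} multiplication by $i,j,k$, while $\sigma(u)$ is \emph{right} multiplication by $\bar u$. Taken literally these commute (associativity in $\mathbb{H}$), so $\Ad_{\sigma(u)}$ would act trivially on $\SpanIJK$, contradicting your claimed formula $x \mapsto u x \bar u$. You need $I,J,K$ and $\sigma$ on the \emph{same} side of $\mathbb{H}^n$; with both on the right (consistent with your phrase ``right $\mathbb{H}$-module'') the direct computation does give $\Ad_{\sigma(u)}(I_x) = I_{ux\bar u}$ as you assert. A second, smaller point: your first argument that $\Ad$ lands in $\SO(3)$ invokes connectedness of $\Hk(V)$, which has not been established and is essentially what the lemma yields; your alternative argument via $\Aut(\mathfrak{su}(2)) \simeq \SO(3)$ is the one that stands on its own.
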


\begin{proof}
    The natural $\Sp(1)$-action on $\mathbb{H}$ by multiplication on the right induces the standard $\Sp(1)$-action on the unit sphere of complex structures $\mathbb{S}_{IJK}$.
    The conclusion follows from a choice of identification $V \simeq \mathbb{H} \otimes_{\mathbb{R}} \mathbb{R}^n$ as $\mathbb{H}$-module.
\end{proof}

Hence a choice of orthonormal basis for $(V, \bm{h})$ (as a left $\mathbb{H}$-module) yields an identification 
\begin{equation}
	\Hk(V) = \Sp(n) \cdot \Sp(1) \simeq \bigl( \Sp(n) \times \Sp(1) \bigr) \big\slash \mathbb{Z} \slash 2 \mathbb{Z} \, .
\end{equation}
Choosing $G = \Hk(V)$, we see that in the notation of the introduction we have
\begin{equation}
    G_{0} = \Sp(V,\bm{h}) \subseteq \Hk(V) \, .
\end{equation}

\subsubsection{Geometric quantisation}

Geometric quantisation on a K\"ahler vector space is straightforward and essentially unique up to the choice of a symplectic potential, which corresponds to a gauge choice on the pre-quantum line bundle.
For $\hslash \in \mathbb{R}_{> 0}$ one considers the triple $(L,h,\nabla_q)$, consisting of the trivial complex line bundle $L \coloneqq V \times \mathbb{C} \to V$ with the tautological Hermitian metric $h$, and the connection $\nabla_q \coloneqq d - \frac{i}{\hslash}\theta_q$ defined by the invariant symplectic potential
\begin{equation}
	\theta_{q} (v) (X) = \frac{1}{2} \omega_{q} (v, X) \, ,
\end{equation}
for $v \in V$ a point and $X$ a tangent vector there.
The above yields pre-quantum data for $(V,\omega_q)$ at level $\hslash^{-1}$. 
We may denote $L_q \to V$ the line bundle to emphasize the structure we are prequantising on $V$.

The bundle $L_{q}$ comes endowed with a natural holomorphic frame
\begin{equation}
	\psi_{0} (q, v) \coloneqq \exp \left( - \frac{1}{4\hslash} g(v,v) \right) \, ,
\end{equation}
which is manifestly independent of $q \in \mathbb{C}P^1$.
For each $q$, the resulting quantum Hilbert space consists of sections $\psi = f \psi_{0}$, with $f \colon V \to \mathbb{C}$ an $I_{q}$-holomorphic \emph{function} with finite $\operatorname{L}^{2}$-norm with respect to the Gaussian measure.
This space is well known to be densely generated by the polynomial functions, which induces a grading on each $\mathcal{H}_{q}$---the Fock grading.

This setting is a particular case of the one discusssed in \S~\ref{sec:HKpotSec}.
Indeed, on a K\"ahler vector space, the function $\mu(v) = \frac{1}{2} \norm{v}^{2}$ is a moment map for the $\operatorname{U}(1)$-action by scalar multiplication and a K\"ahler potential, and moreover
\begin{equation}
	-\frac{i}{2} ( \partial - \overline{\partial}) \mu = \theta
\end{equation}
is the invariant symplectic potential.
Additionally, for each $q \in \mathbb{C}P^{1}$ the action of $T_{q}$ is the standard one.

Furthermore $d$-homogeneous holomorphic functions on a complex vector space are $d$-homogeneous polynomials, whence the decomposition of $\mathcal{H}_{q}$ into isotypical components as a $T_{q}$-module reduces to the well-known Fock grading.
By the identification of the space of such homogeneous polynomials with $\operatorname{Sym}^{d} V_{q}^{\vee}$, the finite-dimensional spaces $\mathcal{H}_{q}^{(d)}$ assemble into finite-rank Hermitian sub-bundles $\mathcal{H}^{(d)} \to \mathbb{C}P^{1}$ of the trivial $\operatorname{L}^{2}(V, L)$-bundle, with a natural isomorphism
\begin{equation}
\label{eq:iso_Sym_Hd}
	\Sym^{d} Z^{\vee} \longrightarrow \mathcal{H}^{(d)}
\end{equation}
of vector bundles over the Riemann sphere.

\subsubsection{Group action on quantum spaces}

The action $\rho^{Z} \colon \Hk(V) \to \Aut(Z)$ has a natural lift to $L = Z \times \mathbb{C}$ as $\rho^Z\times \text{Id}$.
Since $A^{*} \theta_q = \theta_{A.q}$ for $A \in \Hk(V)$ and $q \in \mathbb{C}P^1$, it follows that this action preserves the structure of $L$ as a family of pre-quantum line bundles.
This defines an action $\rho^{\mathcal{H}}$ on sections of $\mathcal{H}^{(d)}$ by pull-back, as in~\eqref{eq:pull_back_action}, and it is easy to check this is a graded fibrewise unitary $\Hk$-action---covering that on hyperk\"ahler 2-sphere.

\begin{thm}
\label{thm:CurvHKV}
    For $q \in \mathbb{C}P^1$ there is a canonical isomorphism $\mathcal{H}_q^{(d)} \simeq \Sym^d(V)$ of simple $\Sp(V,\bm{h})$-modules, and the bundle with connection $(\mathcal{H}^{(d)}, \nabla^{\mathcal{H}^{(d)}})$ is $\Hk(V)$-equivariantly isomorphic to $\mathcal{L}^{d} \otimes \underline{\Sym^d(V)} \to \mathbb{C}P^1$.
\end{thm}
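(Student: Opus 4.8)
The plan is to deduce Theorem~\ref{thm:CurvHKV} from the general result Thm.~\ref{thm:CurvaturePF} by identifying all the abstract ingredients in the explicit linear setting. First I would verify the hypotheses of the abstract construction: by Lem.~\ref{lem:hk_surjection_so(3)} and the discussion following it, $\Hk(V) = \Sp(n)\cdot\Sp(1)$ with $\Sp_0'(V) = \Sp(V,\bm h)$ a compact connected subgroup fixing $I,J,K$ and commuting with the $\Sp(1)$-action; since $L$ is trivial with the $\mHKo$-action lifted by the identity on fibres, the central extension splits and $\Sp_0'(V)$ plays the role of $\mpSpo$. Next I would pin down the $T_q$-decomposition: as recalled above, the K\"ahler potential $\mu(v) = \tfrac12\norm v^2$ makes $i\mu$ a moment map for the standard $T_q$-action, and the holomorphic frame $\psi_0 = \exp(-\tfrac{1}{4\hslash}g(v,v))$ identifies $\mathcal{H}_q$ with the Fock space of $\operatorname{L}^2$-holomorphic functions; the $d$-th isotypical component $\mathcal{H}_q^{(d)}$ consists of $f\psi_0$ with $f$ a $d$-homogeneous $I_q$-holomorphic polynomial, hence $\mathcal{H}_q^{(d)} \simeq \Sym^d(V_q^\vee)$, which is finite-dimensional so the main assumption holds.

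The core of the argument is then twofold. On the one hand, I must show $\mathcal{H}_q^{(d)} \simeq \Sym^d(V)$ \emph{as $\Sp(V,\bm h)$-modules} and that this representation is simple. The action of $A \in \Sp(V,\bm h)$ on $\mathcal{H}_q^{(d)}$ is by pull-back, which on the polynomial $f$ side is precisely the natural $\operatorname{O}(V,g)$-action on $\Sym^d(V^\vee)$, and using $\bm h$ (equivalently $g$ together with the complex structure $I_q$) to identify $V_q^\vee \simeq \overline{V_q} \simeq V$ as $\Sp(V,\bm h)$-modules gives $\mathcal{H}_q^{(d)} \simeq \Sym^d(V)$ canonically. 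Simplicity of $\Sym^d(V)$ as an $\Sp(V,\bm h)$-module is a standard fact about the symmetric powers of the defining representation of the compact symplectic group $\Sp(n)$ (the representation with highest weight $d\varpi_1$ is irreducible); I would simply cite this or note that it follows because $\Sp(n)$ acts irreducibly on $V$ and the Cartan component of $\Sym^d(V)$ exhausts it. On the other hand, I need the $\Hk(V)$-equivariant isomorphism of bundles with connection $\mathcal{H}^{(d)} \simeq \mathcal{L}^d \otimes \underline{\Sym^d(V)}$. Here $m_\lambda^{(d)} = 1$ with $\lambda$ the single active highest weight $d\varpi_1$, so Thm.~\ref{thm:CurvaturePF} directly gives $\mathcal{H}^{(d)} \simeq \mathcal{L}^d \otimes \underline{V_\lambda}$ as $\mpHKo$-equivariant Hermitian bundles with connection; it remains to check this is actually $\Hk(V)$-equivariant (not just $\Hk_0(V)$-equivariant), which in the linear case is immediate since $\Hk(V) = \Hk_0(V)$ by Rem.~\ref{rem:notation}, and that $V_\lambda \simeq \Sym^d(V)$ by the previous step. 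Alternatively, and more concretely, I would use the isomorphism $\Sym^d Z^\vee \simeq \mathcal{H}^{(d)}$ of~\eqref{eq:iso_Sym_Hd} together with the twistor description $Z \simeq \underline{\mathbb{C}^{2n}} \otimes \mathscr{O}(1)$, so that $Z^\vee \simeq \underline{(\mathbb{C}^{2n})^\vee} \otimes \mathscr{O}(-1)$ and $\Sym^d Z^\vee$ decomposes $\Sp(n)$-equivariantly; extracting the $T_q$-weight-$d$ part recovers $\mathcal{L}^d \otimes \underline{\Sym^d(V)}$ since $\mathcal{L} \simeq \mathscr{O}(-1)$ as the $\Sp(1)$-equivariant line bundle of the standard $T_q$-character.

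The main obstacle I anticipate is keeping the several identifications mutually compatible: the identification $\mathcal{L} \simeq \mathscr{O}(\pm 1)$ (sign conventions for the standard character of $T_q$), the identification of $V_q^\vee$, $\overline{V_q}$, and $V$ as $\Sp(V,\bm h)$-modules via $\bm h$, and the comparison of the $\operatorname{L}^2$-connection $\nabla^{\mathcal{H}^{(d)}}$ of~\eqref{eq:L^2_connection} with the tensor-product connection $\nabla \otimes \Id + \Id \otimes \nabla^{\triv}$ on $\mathcal{L}^d \otimes \underline{\Sym^d(V)}$. The connection comparison is handled abstractly by Thm.~\ref{thm:CurvaturePF}, so no curvature computation is needed; the remaining care is purely bookkeeping of equivariant structures. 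Everything else is routine, so the proof reduces to assembling Lem.~\ref{lem:hk_surjection_so(3)}, the Fock-space description, Thm.~\ref{thm:CurvaturePF}, and the irreducibility of $\Sym^d$ of the standard $\Sp(n)$-representation.
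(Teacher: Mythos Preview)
Your proposal is correct and follows essentially the same route as the paper: identify $\mathcal{H}_q^{(d)}$ with degree-$d$ polynomials via the $\Sp(V,\bm h)$-invariant frame $\psi_0$, cite irreducibility of $\Sym^d$ of the standard $\Sp(n)$-representation, and then invoke Thm.~\ref{thm:CurvaturePF} with multiplicity $m_\lambda^{(d)}=1$. The paper's own proof is a two-line sketch of exactly this, so your more careful bookkeeping of the identifications $V_q^\vee \simeq V$ and of the passage from $\mpHKo$- to $\Hk(V)$-equivariance via Rem.~\ref{rem:notation} only makes explicit what the paper leaves implicit.
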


\begin{proof}
	 This follows directly from the above discussion and from Thm.~\ref{thm:CurvaturePF}: The metric $g$, and hence the section $\psi_{0}$, are fixed by $\Sp(V,\bm{h})$.
	 It is known the natural action on $\Sym^{d} V^{\vee}_q$ is irreducible~\cite{Ros06}.
\end{proof}

Altogether the statements of this section establish the assumptions needed to apply Thm.~\ref{thm:super_reps}, which in this particular case yields the following.

\begin{thm}[cf. Thm.~\ref{thm:super_reps}]
\label{thm:HKvectMT}
	The $\Sp(1)$-symmetric geometric quantisation of the hyperk\"ahler vector space $V$ yields the super Hilbert space
	\begin{equation}
	    H = \overline{\bigoplus_{d \in \mathbb{Z}_{\geq 0}} H^{(d)}} \, ,
	\end{equation}
	analogously to \S~\ref{sec:super_spaces}.
	This carries a unitary $\Hk(V)$-representation preserving the splitting, and there is an isomorphism $H^{(d)} \simeq W^{(d)} \otimes \Sym^d(V)$ of simple $\Hk(V)$-modules.
	For every $d \geq 0$ we thus have
	\begin{equation}
	    \dim \bigl( W^{(d)}_+ \bigr) = (d+1) \, , \quad \dim \bigl( W^{(d)}_- \bigr) = 0 \, , \quad \dim \bigl( H^{(d)} \bigr) = (d+1)  \binom{2n+d}{d} \, .
	\end{equation}
\end{thm}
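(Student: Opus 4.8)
The plan is to assemble the theorem from the pieces already established for the hyper-Kähler vector space, checking that each abstract hypothesis of \S~\ref{sec:setup} is met and then invoking Thm.~\ref{thm:thm_Main}. First I would recall that Lem.~\ref{lem:hk_surjection_so(3)} gives the exact sequence $1 \to \Sp(V,\bm h) \to \Hk(V) \to \SO(3) \to 1$ together with the embedding $\sigma\colon\Sp(1)\to\Hk(V)$ lifting the natural surjection; since $V$ is compact as a subgroup of $\operatorname{O}(V,g)$, we may take $\Sp_0(V) = \Sp(V,\bm h)$, which is connected, fixes $I,J,K$, and commutes with the $\Sp(1)$-action by the $\mathbb H$-module description. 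The prequantum data are the trivial bundle $L = V\times\mathbb C$ with $\nabla_q = \dif - \tfrac{i}{\hslash}\theta_q$, and because $A^\ast\theta_q = \theta_{A.q}$ and $\psi_0$ is fixed by $\Sp(V,\bm h)$, the lift $\rho^Z\times\Id$ realises $\mpHKo = \U(1)\times\Hk(V)$ acting on $L$ as required, so $\mpSpo = \U(1)\times\Sp(V,\bm h)$. Moreover $\sigma(-\Id)$ acts trivially on the complex structures, hence lies in $\Sp(V,\bm h) = \Sp_0(V)$, giving the exact sequence~\eqref{FESO}.

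Next I would verify the finite-dimensionality hypothesis and identify the isotypical bundles. The K\"ahler potential $\mu(v) = \tfrac12\lVert v\rVert^2$ is $\Sp(1)$-invariant, proper, with complete gradient generating the standard $T_q = \U(1)$-action, so Thm.~\ref{thm:fdmm} (or directly the elementary Fock-space description) shows each $\mathcal H^{(d)}_q$ is finite-dimensional, equal to the space of $d$-homogeneous $I_q$-holomorphic polynomials, i.e. $\Sym^d V_q^\vee$. As already noted in~\eqref{eq:iso_Sym_Hd} these assemble into the smooth finite-rank bundle $\Sym^d Z^\vee \to \mathbb C P^1$, so the temporary smoothness assumption preceding Thm.~\ref{thm:CurvaturePF} holds and we do not need Rem.~\ref{rem:alternative_definition}. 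By Thm.~\ref{thm:CurvHKV} we have $\mathcal H^{(d)}_q \simeq \Sym^d(V)$ as a simple $\Sp(V,\bm h)$-module (irreducibility of $\Sym^d$ of the standard representation being classical~\cite{Ros06}), so in the notation of \S~\ref{sec:untwisted_connection} there is a single highest weight $\lambda = \lambda(d)$ with multiplicity $m^{(d)}_{\lambda} = 1$, $V_\lambda = \Sym^d(V)$, and $\Lambda^{(d)} = \{\lambda(d)\}$ for $d\ge 0$ (empty for $d<0$, since there are no negative-degree polynomials). Thm.~\ref{thm:CurvaturePF} then gives $\mathcal H^{(d)} = \mathcal H^{(d)}_{\lambda,1} \simeq \mathcal L^d\otimes\underline{\Sym^d(V)}$ as $\Hk(V)$-equivariant Hermitian bundles with connection.

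Then I would apply Thm.~\ref{thm:thm_Main} directly: taking holomorphic cohomology of $\mathcal L^d\otimes\underline{\Sym^d(V)}$ over $\mathbb C P^1$ produces $H^{(d)} = H^{(d)}_{\lambda,1} \simeq W^{(d)}\otimes\Sym^d(V)$ as a simple super $\Hk(V)$-module, where $W^{(d)}_\pm = H^{0/1}(\mathbb C P^1,\mathcal L^d)$; the completed orthogonal sum over $d$ is the super Hilbert space $H$, and since $\Lambda^{(d)} = \varnothing$ for $d<0$ the sum runs over $d\in\mathbb Z_{\ge 0}$, matching the stated form of $H$. The dimension count is then a short computation with standard facts: $\mathcal L$ is the degree-$1$ (or degree-$-1$, depending on sign convention, but in the convention fixed here the positive one) $\Sp(1)$-equivariant line bundle, so for $d\ge 0$ one has $H^0(\mathbb C P^1,\mathcal L^d) = \mathbb C^{d+1}$ and $H^1(\mathbb C P^1,\mathcal L^d) = 0$ by the Borel--Weil theorem / the cohomology of $\mathcal O(d)$; hence $\dim W^{(d)}_+ = d+1$, $\dim W^{(d)}_- = 0$, and $\dim H^{(d)} = (d+1)\dim\Sym^d(V) = (d+1)\binom{2n+d}{d}$, using $\dim_{\mathbb C}\Sym^d(V) = \binom{(\dim_{\mathbb C}V)+d-1}{d}$ with $\dim_{\mathbb C}V = 2n$ (as $V\simeq\mathbb H^n$ has complex dimension $2n$ in any $I_q$). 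The only genuinely non-mechanical point is confirming the sign/degree normalisation of $\mathcal L$ — i.e. that the standard character of $T_q$ corresponds to $\mathcal O(d)$ with nonnegative $d$ rather than $\mathcal O(-d)$ — so that the vanishing $W^{(d)}_- = 0$ indeed holds for $d\ge 0$; this is fixed once and for all by the conventions set when defining $\mathcal L$ in \S~\ref{sec:untwisted_connection} and recorded in the remark there stating $W^{(d)}_- = W^{(-d)}_+ = (0)$ for $d<0$. Everything else is a direct citation of the results already proved.
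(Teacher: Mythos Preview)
Your approach is correct and is exactly the route the paper takes: the ``proof'' in the paper is simply the sentence preceding the theorem, which says that the previous results verify all the hypotheses needed to invoke Thm.~\ref{thm:thm_Main}, and you have spelled this out in full. One small slip to fix: you write $\dim H^{(d)} = (d+1)\binom{2n+d}{d}$ and then justify it via $\dim_{\mathbb C}\Sym^d(V) = \binom{(\dim_{\mathbb C}V)+d-1}{d}$ with $\dim_{\mathbb C}V = 2n$, but that formula gives $\binom{2n+d-1}{d}$, not $\binom{2n+d}{d}$; either the combinatorial identity or the stated dimension needs adjusting (and there is a harmless typo where you wrote ``$V$ is compact as a subgroup'' meaning $\Hk(V)$).
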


The generating series~\eqref{eq:H-Char} and~\eqref{eq:H'-Char} are obtained explicitly from the above
\begin{equation}
	H(t) = \frac{1}{(1-t)^{2n}} \, ,
	\qquad \qquad
	H' \bigl( t,\widetilde{t} \, \bigr) = \frac{1}{\prod_{i=1}^n(1-t\widetilde{t}_i)(1-t\widetilde{t}_i^{-1})} \, .
\end{equation}
On the other hand, since $V_q \simeq \mathbb{C}^{2n}$ is a Stein space, and since the actions of $T_q$ and $T'_q$ only fix the origin, Thm.~\ref{thm:formulaHHprime} also applies, and the result from~\eqref{eq:Localize} and~\eqref{eq:Localizeprime} yields the same formul\ae{}.
Now by Thm.~\ref{thm:HKvectMT} we see that $m^{(d)}_{\Sym^d(V)} = 1$ for $d \in \mathbb{Z}_{\geq 0}$, whence 
\begin{equation}
    G(t,\widetilde{t}) = \sum_{d=0}^\infty  t^d \widetilde{t}^{\lambda_{\Sym^d(V)}} \, .
\end{equation}

\subsection{Four-dimensional examples}
\label{sec:4DExample}

As mentioned in the introduction, in dimension 4 there is a complete classification of $\Sp(1)$-symmetric hyperk\"ahler manifolds up to finite quotients.
Besides $\mathbb{H}$ with its flat metric there are the Taub--NUT metrics on $\mathbb{R}^4$, and the hyperk\"ahler metric on the moduli space of charge-2 monopoles, the Atiyah--Hitchin manifold $M_{\AH}$.

\subsubsection{Taub-NUT metrics}
Consider the case of $M = \mathbb{R}^{4}$ with the Taub-NUT metric $g^{a}$ corresponding to a positive real parameter $a$---the case $a=0$ corresponds to the standard flat metric on $\mathbb{H}$, which we already discussed.
We will denote $\omega_{q}^{a}$ the corresponding symplectic structures.
It is well-known (e.g.~\cite[Rem.~1]{Gauduchon}) that
\begin{equation}
    \mHK \simeq \bigl( \Sp(1) \times \U(1) \bigr) \big\slash \mathbb{Z}_2 \simeq \U(2)\, .
\end{equation}
In particular there is a faithful $\Sp(1)$-action rotating the sphere of hyperk\"ahler structures, while $\Sp(M) = \U(1)$ is compact and commutes with $\Sp(1)$.
Furthermore there exists, unique up to isomorphism, a family of pre-quantum line bundles for $M$, since $H^{2}(M, \mathbb{Z}) = 0 = H^{1}\bigl( M, \U(1) \bigr)$.

The action of $T_{q}' = \bigl(\U(1) \times \U(1)\bigr) \slash \mathbb{Z}_{2} \subseteq \Hk(M)$ is studied explicitly by Gauduchon in~\cite[\textsection~3.2]{Gauduchon} for the complex structure $J_{+}$ corresponding to $q = i$.
The subgroup is identified in that context with $\U(1) \times \U(1)$ via the isomorphism $(t,s) \mapsto (ts, ts^{-1})$.
From Eqq.~(3.10) and~(3.19) of op.~cit. one concludes that the action of $T_{q} = \U(1) \times \{1\}$ on $M_q$ is Hamiltonian with moment map $\mu_q = \mu_{1}^{+} + \mu_{2}^{+}$ (borrowing the notation from Gauduchon), which is easily seen to be proper from the definitions.
Finally Prop.~1 (ibidem) provides a biholomorphism $\Phi^{a}_{+} = \Phi \colon (M, J_{+}) \to \mathbb{C}^{2}$, and by a straightforward check this map intertwines the $T_{q}$-action on $M_{q}$ with the standard $\U(1)$-action on $\mathbb{C}^{2}$.
In particular the $T_{q}$-action extends holomorphically to $\mathbb{C}^{*}$, and the hypotheses of Thm.~\ref{thm:fdmm} are verified.

Thus decomposing $\mathcal{H}_q$ with respect to the $T_{q}$-action yields
\begin{equation}
    \mathcal{H}_q  = \overline{\bigoplus_{d \in \mathbb{Z}_{\geq 0}} \mathcal{H}^{(d)}_q} \, ,
\end{equation}
where the subspaces $\mathcal{H}^{(d)}_q \subseteq \mathcal{H}_q$ are finite-dimensional.
Then we consider the action of the commuting compact group $\mSpo = \{ 1\} \times U(1)$ on $\mathcal{H}^{(d)}_q$ to refine:
\begin{equation}
    \label{eq:decomp_TNUT}
    \mathcal{H}^{(d)}_q  = \bigoplus_{d' \in \Lambda^{(d)}} \mathcal{H}^{(d)}_{d',q}
\end{equation}
where $\Lambda^{(d)} \subseteq \mathbb{Z}_{\geq 0}$ is finite.
In addition, we also have the following statement.

\begin{prop}
    For $q = i$, the pre-quantum line bundle $L_{q}$ admits a $T_{q}$-invariant holomorphic frame $\psi_{q}$ such that $\Phi^{*} (f) \cdot \psi_{q}$ is $\LL$ for every polynomial function $f$ on $\mathbb{C}^{2}$.
\end{prop}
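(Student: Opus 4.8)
\emph{Overview.} The plan is to take $\psi_q$ to be the frame whose Hermitian square is $e^{-\overline\kappa/\hslash}$ for a $T_q$-invariant global K\"ahler potential $\overline\kappa$ of $\omega^a_q$ on $(M,J_+)$, and then to prove the $\operatorname{L}^2$-bound by a Duistermaat--Heckman argument; throughout we identify $(M,J_+)\cong\mathbb{C}^2$ through $\Phi=\Phi^a_+$ (which carries the $T_q$-action to the standard $\U(1)$-scaling by Prop.~1 of op.~cit.) and write $x_0:=\Phi^{-1}(0)$ for its unique fixed point.

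\emph{Step 1: a holomorphic frame and a K\"ahler potential.} First I would note that $\mathbb{C}^2$ is a contractible Stein manifold, so $\operatorname{Pic}(\mathbb{C}^2)=0$ and $L_q\to M_q$ is holomorphically trivial; choosing a nowhere-vanishing holomorphic section $s$ and writing $h(s,s)=e^{-\kappa/\hslash}$ with $\kappa\in C^\infty(M,\mathbb{R})$, the prequantisation equation $F_q=-\tfrac{i}{\hslash}\omega^a_q$ translates (via the identity that the Chern curvature in a holomorphic frame is $-\partial_q\overline\partial_q\log h(s,s)$) into $i\partial_q\overline\partial_q\kappa=\omega^a_q$, so $\kappa$ is a global $J_+$-K\"ahler potential. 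Averaging $\kappa$ over the compact torus $T_q$ gives a $T_q$-invariant potential $\overline\kappa$, differing from $\kappa$ by a pluriharmonic function $u:=(\kappa-\overline\kappa)/\hslash$; since $\mathbb{C}^2$ is simply connected, $u=2\operatorname{Re}g$ for a holomorphic $g$, and I would set $\psi_q:=e^{g}s$, a nowhere-vanishing holomorphic section with $h(\psi_q,\psi_q)=e^{-\overline\kappa/\hslash}$.

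\emph{Step 2: $T_q$-invariance.} For $t\in T_q$, the section $t^*\psi_q$ is holomorphic with the same Hermitian square $t^*e^{-\overline\kappa/\hslash}=e^{-\overline\kappa/\hslash}$, so $t^*\psi_q=\chi(t)\psi_q$ for a continuous character $\chi\colon T_q\to\U(1)$; evaluating at the fixed point $x_0$ identifies $\chi$ (up to sign) with the $T_q$-weight on the fibre $(L_q)_{x_0}$, which by Kostant's formula~\eqref{eq:konstant_moment} equals $\mu_q(x_0)/\hslash$. With Gauduchon's normalisation the moment map $\mu_q=\mu^+_1+\mu^+_2$ vanishes at $x_0$ — this is exactly what makes the $T_q$-grading run over $d\in\mathbb{Z}_{\ge 0}$ as in the text above — so $\chi$ is trivial and $\psi_q$ is a $T_q$-invariant holomorphic frame.

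\emph{Step 3: the $\operatorname{L}^2$-bound, and the hard part.} Writing $\phi:=\overline\kappa\circ\Phi^{-1}$, a $\U(1)$-invariant K\"ahler potential for $(\Phi^{-1})^*\omega^a_q$ on $\mathbb{C}^2$, so that $\Phi_*(\dif\vol)=\tfrac12(i\partial\overline\partial\phi)^2$, and bounding an arbitrary polynomial by $\abs{f(z)}^2\le C_f(1+\norm{z}^2)^{N_f}$, the claim reduces to $\int_{\mathbb{C}^2}(1+\norm{z}^2)^N e^{-\phi/\hslash}\,\tfrac12(i\partial\overline\partial\phi)^2<\infty$ for all $N$. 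I would first observe that along each complex ray through $0$ the invariant function $\phi$ is convex in $s:=\log\norm{z}$ with $\tfrac12\partial_s\phi$ equal up to sign to $\mu_q$, and that properness of $\mu_q$ forces $\phi/\log\norm{z}\to\infty$; hence $(1+\norm{z}^2)^N e^{-\phi/2\hslash}$ is bounded and it suffices to show $\int_{\mathbb{C}^2}e^{-\phi/2\hslash}\,\tfrac12(i\partial\overline\partial\phi)^2<\infty$. Bounding $e^{-\phi/2\hslash}\le e^{-\psi(\mu_q)/2\hslash}$ with $\psi(t):=\inf_{\mu_q=t}\phi\to\infty$, pushing forward along $\mu_q$, and using the Duistermaat--Heckman theorem~\cite{DH} — by which the push-forward of the Liouville measure has a density polynomial away from the unique critical value $0$, exactly as in the proof of Prop.~\ref{prop:L2H} — turns this into $\int_0^\infty e^{-\psi(t)/2\hslash}\rho(t)\,dt$ with $\rho$ polynomial on $(0,\infty)$, convergent once $\psi(t)/\log t\to\infty$. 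The hard part will be to establish this last growth, which rests on an auxiliary polynomial comparison between $\mu_q$, $\phi$ and $\norm{z}$: here I expect one has to appeal to Gauduchon's explicit Taub--NUT formulas (Eqq.~(3.10), (3.19) and Prop.~1 of op.~cit.) for the moment map and complex coordinates — in brief, that $e^{-\phi/\hslash}$ decays faster than any polynomial in $\norm{z}$ while the Liouville volume grows at most polynomially in $\norm{z}$. Everything else in the argument is formal.
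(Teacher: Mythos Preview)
Your Steps~1--2 are correct and more conceptual than the paper's route, but Step~3---which you honestly flag as ``the hard part''---is where essentially all the content lies, and you do not carry it out. The paper takes a rather different, fully explicit approach throughout: instead of averaging an abstractly obtained potential, it \emph{builds} a $T_{q}$-invariant K\"ahler potential directly by exploiting that the moment maps $\mu_{j}$, $\mu_{k}$ for the \emph{other} two stabilisers are K\"ahler potentials for $\omega_{i}^{a}$ (the HKLR mechanism), and sets
\[
\varphi \;=\; \tfrac12(\mu_{j}+\mu_{k}) \;=\; r + \tfrac{a^{2}}{2}\bigl(r^{2}+x_{1}^{2}\bigr)\,.
\]
With this explicit $\varphi$ in hand, the $\operatorname{L}^{2}$ estimate is a direct pointwise computation: writing Gauduchon's biholomorphism as $w_{1}=e^{a^{2}x_{1}}z_{1}$, $w_{2}=e^{-a^{2}x_{1}}z_{2}$ (so $\lvert w_{1}^{n}w_{2}^{m}\rvert^{2}=e^{2a^{2}(n-m)x_{1}}\lvert z_{1}\rvert^{2n}\lvert z_{2}\rvert^{2m}$), one checks the elementary inequalities $\tfrac{1}{\hslash}\varphi-2a^{2}(n-m)x_{1}\ge \tfrac{1}{4\hslash}\mu_{j}-C$ and $\lvert z_{1}^{n}z_{2}^{m}\rvert^{2}\le \mu_{j}^{2(n+m)}$, whence the integrand is dominated by $\mu_{j}^{2(n+m)}e^{-\mu_{j}/4\hslash}$, integrable by Duistermaat--Heckman applied to the proper moment map $\mu_{j}$.

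Your abstract $\overline{\kappa}$ must in fact coincide with this $\varphi$ up to an additive constant (a $T_{q}$-invariant pluriharmonic function on $\mathbb{C}^{2}$ with the diagonal $\U(1)$-action is the real part of a weight-zero holomorphic function, hence constant), so when you ``appeal to Gauduchon's explicit formulas'' you are effectively reconstructing the paper's $\varphi$ anyway; the abstract detour buys nothing. Your proposed reduction via $\psi(t)=\inf_{\mu_{q}=t}\phi$ and $\psi(t)/\log t\to\infty$ is valid but not how the paper proceeds, and verifying it still needs the same explicit comparison. One further loose end in your Step~3: the claim that $\phi/\log\lVert z\rVert\to\infty$ requires \emph{uniformity} over complex rays, which follows (using compactness of the level set $\{\mu_{q}=N\}$ and convexity along rays) but is not automatic from the ray-by-ray argument you sketch.
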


\begin{proof}
    Recall that, again in the notations of~\cite{Gauduchon}, $x_{1}$, $x_{2}$, and $x_{3}$ are three real-valued functions on $M$ whose span is preserved by $\Sp(1)$, which acts on them by rotations in the standard way.
    Furthermore, all three functions are fixed by the action of $\U(1) = \Sp(M)$.
    Writing $r = \sqrt{x_{1}^{2} + x_{2}^{2} + x_{3}^{2}}$, it follows from~Eqq.~(3.19), (2.11), and (2.12) (ibidem) that the aforementioned moment map $\mu$ can be expressed as
    \begin{equation}
        \mu = r + a^{2} (x_{2}^{2} + x_{3}^{2}) \, .
    \end{equation}
    Since $\mu_{i} \coloneqq \mu$ is a moment map for $T_{i}$ with respect to $\omega_{i}^{a}$, it follows that for every $g \in \Sp(1)$ the function $(g^{-1})^{*} \mu$ generates the $T_{g.i}$-action with respect to $\omega_{g.i}^{a}$.
    In particular, if $g.i = j$, then the flow associated to
    \begin{equation}
        \mu_{j} \coloneqq (g^{-1})^{*} \mu = r + a^{2} (x_{1}^{2} + x_{3}^{2})
    \end{equation}
    with respect to $\omega_{j}^{a}$ rotates the circle spanned by $\omega_{3}^{a}$ and $\omega_{1}^{a}$.
    It is therefore a K\"ahler potential for $\omega_{i}^{a}$~\cite{HKLR87}, and repeating the argument when $g.i = k$ so is $\mu_{k} \coloneqq r + a^{2} (x_{1}^{2} + x_{2}^{2})$.
    Therefore the $T_{i}$-invariant function
    \begin{equation}
        \varphi = \varphi_{i} \coloneqq \frac{\mu_{j} + \mu_{k}}{2} = r + \frac{a^{2}}{2} (r^{2} + x_{1}^{2})
    \end{equation}
    is also a K\"ahler potential.
    It follows that for every $g \in \Sp(1)$ the function $(g^{-1})^{*} \varphi_{i}$ is completely determined by $q = g.i$, so that
    \begin{equation}
        \varphi_{q} \coloneqq (g^{-1})^{*} \varphi_{i}
    \end{equation}
    is well-defined, and a potential for $\omega_{q}^{a}$.
    From this we obtain an explicit realisation of the family of pre-quantum line bundles, for which the functions $\psi_{0,q} = e^{-\frac{1}{2\hslash} \varphi_{q}}$ define holomorphic frames.
    
    Now note the function $\mu$ is bounded below, and its only critical point is the origin---the only fixed point of the induced action.
    We may then apply the Duistermaat--Heckman theorem~\cite{DH} as in Prop.~\ref{prop:L2H} to conclude that $e^{-\alpha \mu}$ is integrable with respect to the Taub-NUT volume $\dif \vol^{a}$ for every parameter $\alpha \in \mathbb{R}_{> 0}$.
    The same clearly applies to $\mu_{j}$ and $\mu_{k}$, and from this it is easily deduced that
    \begin{equation}
        e^{-\alpha \varphi_{q}} \in \LL(M, \dif \vol^{a})
    \end{equation}
    for every $q \in \mathbb{C}P^{1}$ and $\alpha > 0$.
    In particular, the holomorphic frames constructed above are $\LL$.
    
    To conclude we recall that the two components of $\Phi$ are defined as 
    \begin{equation}
        w_{1} = e^{a^{2} x_{1}} z_{1} \, ,
        \qquad
        w_{2} = e^{-a^{2} x_{2}} z_{2} \, ,
    \end{equation}
    where $z_{1}$ and $z_{2}$ are the standard $i$-holomorphic coordinates on $M = \mathbb{H}$ with respect to the usual flat metric (cf.~\cite[Eq.~(3.4)]{Gauduchon}).
    We need to show that, for every $n, m \in \mathbb{Z}$, the function $w_{1}^{n} w_{2}^{m} \psi_{0}$ is also $\LL$.
    Expanding the definition of $\varphi$ yields
    \begin{equation}
        \begin{split}
            \frac{1}{\hslash} \varphi - 2 a^{2} (n-m) x_{1}
            ={}& \frac{r}{\hslash} + \frac{a^{2}}{2 \hslash} (r^{2} + x_{1}^{2}) - 2 a^{2} (n-m) x_{1} \\
            \geq{}& \frac{r}{2 \hslash} + \frac{a^{2}}{4 \hslash} (r^{2} + x_{1}^{2}) - C
            = \frac{1}{2 \hslash} \varphi - C \geq \frac{1}{4 \hslash} \mu_{j} - C
        \end{split}
    \end{equation}
    provided $C \in \mathbb{R}_{> 0}$ is large enough.
    Furthermore, it is a simple consequence of the definitions in~\cite{Gauduchon} that $\abs{z_{1}}^{2} + \abs{z_{2}}^{2} = 2r$, whence
    \begin{equation}
        \abs{z_{1}^{n} z_{2}^{m}}^{2} \leq (2r)^{2(n+m)} \leq \mu_{j}^{2(n+m)} \, .
    \end{equation}
    Collecting the estimates and using again the Duistermaat--Heckman theorem we conclude:
    \begin{equation}
        \int_{M} \abs{w_{1}^{n} w_{2}^{m}}^{2} \psi_{0}^{2} \dif \vol^{a}
        \leq e^{C} \int_{M} \mu_{j}^{2(m+n)} e^{-\frac{1}{4\hslash} \mu_{j}} \dif \vol^{a} < \infty \, . 
    \end{equation}
\end{proof}

As a consequence of this result we have $\dim \mathcal{H}^{(d)}_{d'} = 1$ for all $d \in \mathbb{Z}_{\geq 0}$ and $d' = d-2j$ with $j \in \set{0,\dotsc,d}$, and we conclude that $\mathcal{H}^{(d)}_{d'} \simeq \mathcal{L}^{d}$ for such values of $d$ and $d'$.

\begin{thm}
    The generating series~\eqref{eq:H'-Char} is the same as for the flat metric, namely
    \begin{equation}
        H' \bigl( t,\widetilde{t} \, \bigr) = \frac{1}{\bigl( 1 - t \widetilde{t} \, \bigr) \bigl( 1 - t\widetilde{t}^{-1} \, \bigr)} \, .
    \end{equation}
    
    We thus have
    \begin{equation}
        H = \overline{\bigoplus_{d \in \mathbb{Z}_{\geq 0}} H^{(d)}} \, , \qquad H^{(d)} = \bigoplus_{d' \in \Lambda^{(d)}} H_{d'}^{(d)} = H^{0} \left(\mathbb{C}P^{1}, \mathcal{L}^{d} \right)^{\oplus(d+1)} \, .
    \end{equation}
\end{thm}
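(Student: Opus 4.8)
The plan is to build on the preceding Proposition, which at $q = i$ identifies $\mathcal{H}_{q}$ with an explicit weighted $\operatorname{L}^{2}$-space on $\mathbb{C}^{2}$, and then to read off, in order, the $T'_{q}$-weight decomposition, the structure of each isotypical sub-bundle over $\mathbb{C}P^{1}$, the generating series, and finally the sheaf cohomology. No further analytic estimate is needed; what remains is representation-theoretic bookkeeping.

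First I would fix $q = i$ and work through the biholomorphism $\Phi = \Phi^{a}_{+} \colon (M,J_{+}) \to \mathbb{C}^{2}$ together with the $T_{q}$-invariant holomorphic frame $\psi_{q}$ of $L_{q}$ supplied by the Proposition. Because $\psi_{q}$ is, by its construction there, the exponential of a K\"ahler potential $\varphi_{i} = r + \tfrac{a^{2}}{2}(r^{2} + x_{1}^{2})$ involving only functions fixed by $\Sp_{0}(M) = \U(1)$, it is invariant not only under $T_{q}$ but under the whole maximal torus $T'_{q} = T_{q} \times T$; moreover, by the discussion around~\eqref{eq:decomp_TNUT} and the coordinate formulae in the proof of the Proposition, $\Phi$ intertwines the $T'_{q}$-action with a linear action of a two-torus on $\mathbb{C}^{2}$ diagonalised by the coordinates $w_{1}, w_{2}$, with $T_{q}$ acting by overall scaling. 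Via $(\Phi,\psi_{q})$ the space $\mathcal{H}_{q}$ becomes a space of holomorphic functions on $\mathbb{C}^{2}$ which, by the Proposition, contains every polynomial. Decomposing $\mathcal{H}_{q}$ into $T'_{q}$-isotypical components, each summand consists of $\operatorname{L}^{2}$-holomorphic functions of a fixed two-torus weight, hence is spanned by a single monomial $w_{1}^{n}w_{2}^{m}$ (and is zero unless $n,m \geq 0$). Since $w_{1}^{n}w_{2}^{m}\psi_{q}$ carries $T_{q}$-weight $d = n+m$ and $T$-weight $d' = n-m$, this gives $\dim \mathcal{H}^{(d)}_{d',q} = 1$ exactly for $d' \in \Lambda^{(d)} = \set{d, d-2, \dotsc, -d}$ and $0$ otherwise, which re-establishes the claim stated just before the theorem; in particular $\abs{\Lambda^{(d)}} = d+1$.

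Next I would apply Thm.~\ref{thm:CurvaturePF} (equivalently Rem.~\ref{rem:alternative_definition}): as $\Sp_{0}(M) = \U(1)$ is abelian every simple module $V_{d'}$ is a character, so each line bundle $\mathcal{H}^{(d)}_{d'} \to \mathbb{C}P^{1}$ is $\Sp(1)$-equivariantly isomorphic to $\mathcal{L}^{d}$ with its invariant connection, whence $\mathcal{H}^{(d)} \simeq (\mathcal{L}^{d})^{\oplus(d+1)}$. The generating series~\eqref{eq:H'-Char} is then
\begin{equation*}
	H'\bigl( t,\widetilde{t} \, \bigr) = \sum_{d \geq 0} t^{d} \sum_{j=0}^{d} \widetilde{t}^{\,d-2j} = \sum_{a,b \geq 0} t^{a+b} \widetilde{t}^{\,a-b} = \frac{1}{\bigl( 1 - t\widetilde{t} \, \bigr)\bigl( 1 - t\widetilde{t}^{-1} \, \bigr)} \, ,
\end{equation*}
the middle equality being the reindexing $a = j$, $b = d-j$ and the last the product of two geometric series; this is exactly the $n = 1$ instance of the flat-space formula in Thm.~\ref{thm:HKvectMT}.

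Finally I would carry out the construction of~\S\ref{sec:super_spaces}: for each admissible $d'$ one has $\mathcal{H}^{d}_{d'} \simeq \mathcal{L}^{d}$, so $H^{(d)}_{d'} = H^{*}(\mathbb{C}P^{1}, \mathcal{L}^{d})$, and for $d \geq 0$ one has $\dim H^{0}(\mathbb{C}P^{1}, \mathcal{L}^{d}) = d+1$ and $H^{1}(\mathbb{C}P^{1}, \mathcal{L}^{d}) = 0$ (as recorded in Thm.~\ref{thm:HKvectMT}). Summing over the $d+1$ values of $d'$ gives $H^{(d)} = H^{0}(\mathbb{C}P^{1}, \mathcal{L}^{d})^{\oplus(d+1)}$, concentrated in even degree, and $H = \overline{\bigoplus_{d \geq 0} H^{(d)}}$ carries the unitary $\Hk(M)$-action of Thm.~\ref{thm:thm_Main}. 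I expect the step requiring the most care to be the weight decomposition in the second paragraph: one must verify that no $\operatorname{L}^{2}$-holomorphic section beyond the monomial multiple of $\psi_{q}$ can appear in a given $T'_{q}$-weight---this uses crucially that $\Phi$ is a biholomorphism onto \emph{all} of $\mathbb{C}^{2}$ intertwining the \emph{full} two-torus, not merely $T_{q}$---together with the square-integrability of every such monomial multiple, which is precisely the content of the Proposition.
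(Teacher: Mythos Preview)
Your argument is correct and is exactly the approach the paper takes: the theorem is stated there without a separate proof, as an immediate consequence of the preceding Proposition together with the sentence ``As a consequence of this result we have $\dim \mathcal{H}^{(d)}_{d'} = 1$ \dots\ and we conclude that $\mathcal{H}^{(d)}_{d'} \simeq \mathcal{L}^{d}$'', which is precisely the weight-space bookkeeping you spell out. One cosmetic slip: in your reindexing you should have $a = d-j$, $b = j$ (or equivalently $\widetilde{t}^{\,b-a}$), though the result is unaffected by the $\widetilde{t} \leftrightarrow \widetilde{t}^{-1}$ symmetry.
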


\subsubsection{The Atiyah--Hitchin manifold}

Let us consider the Atiyah--Hitchin manifold $M_{\AH}$, the last four-dimensional case.
We shall discuss the extent to which our methods apply here.

The Atiyah--Hitchin manifold can be realised as the moduli space of charge-2 centred magnetic monopoles in $\mathbb{R}^{3}$, and it comes with a natural Riemannian metric preserved by the $\SO(3)$-action induced by rotating monopoles.
The quaternionic nature of the Bogomolny equation, of which the monopoles represented by $M_{\AH}$ are a particular class of solutions, induces a family of almost complex structures, which can be better understood via Donaldson's description in terms of rational maps~\cite{Don84b}.
More precisely, the choice of an oriented line through the origin in $\mathbb{R}^3$ induces an identification
\begin{equation}
    \widetilde{M}_{\AH} = \Set{S(z) = \frac{uz + v}{z^{2} - w} \in \mathbb{C}(z) | v^{2} - wu^{2} = 1} \eqqcolon R_{2}^{0} \, ,
\end{equation}
where the left-hand side denotes the (two-fold) universal cover of $M_{\AH}$.
The Atiyah--Hitchin manifold is recovered from the monodromy action, generated by $(u,v,w) \mapsto (-u,-v,w)$.
The resulting map is a biholomorphism with respect to one of the aforementioned almost complex structures, establishing that the latter is integrable and the former is K\"ahler.
Rotations around the preferred direction induce a $\U(1)$-action of $R_{2}^{0}$ by
\begin{equation}
    \label{eq:MAHstab}
    t.(u,v,w) = (tu, v, t^{-2}w) \, .
\end{equation}
As the preferred direction changes across all possible choices, this results in a family of K\"ahler structures parametrised by $\mathbb{C}P^{1}$, which is clearly rotated by the $\SO(3)$-action (see~\cite[Ch.~2]{AHBook}).

The above identification is not isometric with respect to the Riemannian embedding $R_{2}^{0} \subseteq \mathbb{C}^{3}$; nonetheless, the Riemannian structure on $M_{\AH}$ can be described by studying the $\SO(3)$-orbits~\cite[Chs.~8-11]{AHBook}.
The generic stabiliser of a monopole is the Klein four-group $K_{4}$, while orbits are parametrised by $k = \sin(\alpha)$ for an angle $\alpha \in \bigl[ 0,\frac{\pi}{2} \bigr]$, resulting in a description of an open dense of $M_{\AH}$ as the product $(0, 1) \times \SO(3) \slash K_{4}$; further as $k \to 0$ the orbit degenerates to a diffeomorphic copy of $\mathbb{R}P^{2}$, onto which $M_{\AH}$ deformation-retracts.

According to Swann's work~\cite[\textsection~6, \emph{Four-manifolds}]{Swann}, $M_{\AH}$ does not admit a hyperk\"ahler potential.
Furthermore, one sees from~\eqref{eq:MAHstab} that the stabiliser of each K\"ahler structure has exactly one fixed point, and since the manifold has the homotopy type of $\mathbb{R}P^{2}$ there can be no proper moment map.
Nonetheless the above homotopy equivalence yields 
\begin{equation}
    H^{1} \bigl( M_{\AH} , \U(1) \bigr) \simeq H^{2} (M_{\AH}, \mathbb{Z}) \simeq \mathbb{Z} \big\slash 2\mathbb{Z} \, ,
\end{equation}
hence by \S~\ref{sec:Sp1equi} there are exactly two inequivalent $\SO(3)$-equivariant families of pre-quantum line bundles.
They differ by a twist by a family of flat connections on the non-trivial complex line bundle on $M_{\AH}$.

The family supported on the trivial bundle can be constructed by means of the K\"ahler potentials of Olivier~\cite{Oli91}.
Namely the metric on the Atiyah--Hitchin manifold is the completion of
\begin{equation}
    \label{eq:AHmetric}
    \dif s^{2} = \frac{\beta^{2} \gamma^{2} \delta^{2}}{\left(4 k^{2} (1-k^{2}) K^{2}\right)^{2}} \dif m^{2} + \beta^{2} \sigma_{x}^{2} + \gamma^{2} \sigma_{y}^{2} + \delta^{2} \sigma_{z}^{2} \, ,
\end{equation}
defined on $(0, \frac{\pi}{2}) \times \SO(3) \slash K_{4}$.
We follow the conventions of op.~cit.
Namely, $m = k^{2}$ is used as a coordinate in place of $k$, while $(\sigma_{x}, \sigma_{y}, \sigma_{z})$ is an orthonormal frame of $\T^*\SO(3) \to \SO(3)$ and the coefficients $\beta, \gamma, \delta$ are functions of $k$ determined by
\begin{equation}
    \beta \gamma = - E K \, ,
    \qquad
    \gamma \delta = - E K + K^{2} \, ,
    \qquad
    \beta \delta = - E K + (1-k^{2}) K^{2} \, ,
\end{equation}
where
\begin{equation}
    K \coloneqq K(k) = \int_{0}^{\frac{\pi}{2}} \frac{\dif \phi}{\sqrt{1 - k^{2} \sin^{2}\phi}} \, ,
    \qquad
    E \coloneqq E(k) = \int_{0}^{\frac{\pi}{2}} \sqrt{1 - k^{2} \sin^{2} \phi} \dif \phi
\end{equation}
are the complete elliptic integrals of the first and second kind, respectively.

Op.~cit. then uses the Euler angles $(\varphi, \theta, \psi)$ as coordinates on $\SO(3)$ to give an explicit K\"ahler potential $\Omega$ for one of the complex structures, say $I_{3}$, preserved by rotations in the angle $\varphi$.
This is given in of~\cite[Eq.~55]{Oli91} and can be written explicitly using Eqq.~(6), (24), (25) and (36) therein, getting the formula
\begin{equation}
    \Omega = \frac{\beta \gamma + \gamma \delta + \delta \beta}{8} + \frac{1}{8} \left(\gamma \delta \sin^{2} \theta \cos^{2} \psi + \delta \beta \sin^{2} \theta \sin^{2} \psi + \gamma \beta \cos^{2} \theta \right) \, .
\end{equation}
Note for $k \in (0,1)$ this function extends continuously to the whole of $\SO(3)$, and the trigonometric functions of $(\theta,\psi)$ descend to the projective space at $k=0$; hence the potential extends to the completion $M_{\AH}$.
Finally, we emphasise that this potential is independent of the variable $\varphi$, which is to say that it is invariant under the action of the $I_{3}$-stabiliser.
It follows that $\Omega$ defines an equivariant family of potentials under the $\SO(3)$-action, whence an equivariant family of pre-quantum line bundles by the usual construction, together with a holomorphic frame $\psi_{0} = e^{-\frac{1}{2 \hslash} \Omega}$ for $I_{3}$.

\begin{prop}
    \label{prop:L2AH}
    The function $e^{- \alpha \Omega}$ is integrable on $M_{\AH}$ for $\alpha \in \mathbb{R}_{> 0}$.
\end{prop}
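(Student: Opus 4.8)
The plan is to collapse the four-dimensional integral to a one-dimensional one in the $\SO(3)$-orbit parameter and to exploit the growth of $\Omega$ as $k\to 1$. There is no moment-map shortcut here as in Prop.~\ref{prop:L2H}, since $\Omega$ is a K\"ahler potential rather than a moment map, so I would argue directly from~\eqref{eq:AHmetric} and the explicit formula for $\Omega$. From the block-diagonal form of~\eqref{eq:AHmetric}, the Riemannian volume form on the open dense orbit stratum $(0,1)\times\SO(3)\slash K_{4}$ is, up to a positive constant,
\[
    \dif\vol \;=\; \frac{\beta^{2}\gamma^{2}\delta^{2}}{4k^{2}(1-k^{2})K^{2}}\;\dif m\wedge\sigma_{x}\wedge\sigma_{y}\wedge\sigma_{z}\,,
\]
and the bolt $\mathbb{R}P^{2}$ added in the completion $M_{\AH}$ has measure zero. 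Since $\SO(3)\slash K_{4}$ has finite volume and $e^{-\alpha\Omega}$ depends on it only through bounded trigonometric functions of $(\theta,\psi)$, integrability of $e^{-\alpha\Omega}$ on $M_{\AH}$ follows once one shows $\int_{0}^{1} e^{-\alpha\omega_{0}(k)}\,\frac{\beta^{2}\gamma^{2}\delta^{2}}{4k^{2}(1-k^{2})K^{2}}\,\dif m<\infty$ for some lower bound $\omega_{0}(k)\le\inf_{(\theta,\psi)}\Omega$ depending on $k$ alone.

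To produce $\omega_{0}$, group the displayed formula for $\Omega$ by the products $\beta\gamma,\gamma\delta,\delta\beta$, whose trigonometric coefficients all lie in $[1,2]$. Differentiating the complete elliptic integrals, using $K'=\frac{E-(1-k^{2})K}{k(1-k^{2})}$ and $E'=\frac{E-K}{k}$, gives $K(k)>E(k)$ and $(1-k^{2})K(k)-E(k)<0$ on $(0,1)$, whence $\beta\gamma<0$ and $\delta\beta<0<\gamma\delta$; therefore
\[
    \Omega \;\ge\; \frac{1}{8}\bigl(2\beta\gamma+\gamma\delta+2\delta\beta\bigr) \;=\; \frac{1}{8}\,K(k)\bigl((3-2k^{2})K(k)-5E(k)\bigr) \eqqcolon \omega_{0}(k)\,.
\]
Since $3-2k^{2}\ge 1$ while $K(k)\to\infty$ and $E(k)\to 1$ as $k\to 1$, there is $k_{0}\in(0,1)$ with $\omega_{0}(k)\ge c_{0}K(k)^{2}$ on $[k_{0},1)$ for some $c_{0}>0$; and $\omega_{0}$ is bounded on $[0,k_{0}]$ by continuity.

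Now split the one-dimensional integral at $k_{0}$. On $[0,k_{0}]$ the functions $\beta,\gamma,\delta,K$ are continuous and finite, $1-k^{2}$ is bounded away from $0$, $\beta^{2}\gamma^{2}\delta^{2}=O(k^{4})$ as $k\to 0$ (so the factor $k^{-2}$ does no harm), and $\Omega$ is bounded on the compact region, so this part is finite. On $[k_{0},1)$, write $\beta^{2}\gamma^{2}\delta^{2}=(\beta\gamma)(\gamma\delta)(\delta\beta)=EK^{3}(K-E)\bigl(E-(1-k^{2})K\bigr)\le E^{2}K^{4}$; together with $k\ge k_{0}$ and $\omega_{0}\ge c_{0}K^{2}$ this bounds the integral by $C\int_{k_{0}}^{1} e^{-c_{0}\alpha K(k)^{2}}\frac{K(k)^{2}}{1-k^{2}}\,\dif k$. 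Changing variables to $K$, with $\frac{\dif K}{\dif k}=\frac{E-(1-k^{2})K}{k(1-k^{2})}$ comparable to $(1-k^{2})^{-1}$ as $k\to 1$, turns this into $C'\int^{\infty} e^{-c_{0}\alpha K^{2}}K^{2}\,\dif K$, which converges.

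The main obstacle is this last step: one has to make the $k\to 1$ behaviour of the complete elliptic integrals precise enough to upgrade the asymptotics $K(k)\sim\frac{1}{2}\log\frac{16}{1-k^{2}}$, $E(k)\to 1$, $\frac{\dif K}{\dif k}\sim(1-k^{2})^{-1}$ to honest inequalities on $[k_{0},1)$, and then verify that the Gaussian-in-$K$ decay $e^{-c_{0}\alpha K^{2}}$---where positivity of $\alpha$ is essential---really does beat the volume growth $\frac{K(k)^{2}}{1-k^{2}}$, which is unbounded (the manifold being noncompact) but only at the rate those same asymptotics prescribe. The $k\to 0$ end, where everything stays finite, is by contrast immediate.
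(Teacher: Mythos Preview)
Your argument is correct and follows essentially the same route as the paper: write out the volume form from~\eqref{eq:AHmetric}, bound $\Omega$ below by a function of $k$ alone using the signs of $\beta\gamma$, $\gamma\delta$, $\delta\beta$, and then show the resulting one-dimensional integral converges via the $k\to 1$ asymptotics of the elliptic integrals and a Gaussian-type substitution. The only cosmetic differences are that the paper uses the cruder bound $\Omega\ge\tfrac{\gamma\delta}{8}$ (your $\omega_{0}$ is sharper and more carefully derived) and substitutes $x=-\log(1-k^{2})$ rather than $K$ at the end, which amounts to the same thing since $K\sim\tfrac{1}{2}x$.
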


\begin{proof}
    From~\eqref{eq:AHmetric} we obtain the following expression for the volume form on (the complement of a negligible set in) $M_{\AH}$:
    \begin{equation}
        \dif \vol = \frac{\beta^{2} \gamma^{2} \delta^{2}}{4 k^{2} (1-k^{2}) K^{2}} \, \dif m \sigma_{x} \sigma_{y} \sigma_{z} \, .
    \end{equation}
    We need to show that
    \begin{equation}
        \int_{(0,1) \times \SO(3)} e^{- \alpha \Omega} \frac{\beta^{2} \gamma^{2} \delta^{2}}{4 k^{2} (1-k^{2}) K^{2}} \, \dif m \sigma_{x} \sigma_{y} \sigma_{z} < \infty \, .
    \end{equation}
    
    Note that $\beta \gamma \leq 0$, $\gamma \delta \geq 0$, and $\beta \delta \leq 0$ yield
    \begin{equation}
        \Omega \geq \frac{\gamma \delta}{8} \, .
    \end{equation}
    We may then use these bounds and the Fubini-Tonelli theorem to reduce the statement to
    \begin{equation}
        \int_{0}^{1} e^{-\frac{\alpha}{8} \gamma \delta} \frac{\beta^{2} \gamma^{2} \delta^{2}}{k^{2} (1-k^{2}) K^{2}} \, \dif m < \infty \, .
    \end{equation}
    
    We will proceed by studying the asymptotic behaviour of the integrand in the limit $k \to 1$---the integral is necessarily regular for $k \to 0$.
    It is well-known that
    \begin{equation}
        K \sim \frac{1}{2} \log (1 - k^{2}) \, ,
    \end{equation}
    and since $E(1) = 1$ we find that
    \begin{equation}
        \beta \gamma \sim - \frac{1}{2} \log(1 - k^{2}) \, ,
        \qquad
        \gamma \delta \sim \frac{1}{4} \log^2(1 - k^{2}) \, ,
        \qquad
        \beta \delta \sim - \frac{1}{2} \log(1-k^{2}) \, ,
    \end{equation}
    hence the integral converges by comparison with
    \begin{equation}
        \int_{0}^{1} \exp \Bigl( -\frac{\alpha}{32} \log^2(1-k^{2}) \Bigr) \frac{\log^2(1-k^{2})}{(1-k^{2})} \dif m = \int_{0}^{\infty} e^{-\frac{\alpha}{32} x^{2}} x^{2} \dif x < \infty \, .
    \end{equation}
\end{proof}

For $\alpha = 1/\hslash$ this implies the holomorphic frame $\psi_{0}$ is $\LL$, hence an element of $\mathcal{H}^{(0)}_{I_{3}}$; in principle more $\LL$ holomorphic sections may be found considering functions of the holomorphic coordinates $(u,v,w)$ on $R_{0}^{2}$.
If all the monomials that descend to $M_{\AH}$ are $\LL$, then one concludes that $\mathcal{H}_{q}^{(d)}$ has infinite rank for every integer $d$, since $u^{a} v^{b} w^{c}$ is $(a - 2c)$-homogeneous and well-defined on $M_{\AH}$ if $a+b$ is even.
We obtain a partial result in this direction, showing that all powers of $w$ are $\LL$.

The problem of describing $u$, $v$, and $w$ in terms of the setup above is addressed in~\cite[Chapters 6-7]{AHBook}, by making use of the twistor description and spectral curves~\cite{Hur83}.
Introducing parameters
\begin{equation}
    k_{1} = \frac{\sqrt{k \sqrt{1-k^{2}}} K}{2} \, ,
    \qquad
    k_{2} = \frac{1 - 2k^{2}}{3 k \sqrt{1-k^{2}}} \, ,
\end{equation}
consider the elliptic curve
\begin{equation}
    y^{2} = 4k_{1}^{2} \bigl( x^{3} - 3k_{2} x^{2} - x \bigr)
\end{equation}
and let $\wp$, $\zeta$ be its corresponding Weierstrass functions, $\eta$ the real period of $\zeta$.
Suppose that $a, b \in \mathbb{C}$ are the entries of a matrix in $\SU(2)$, thought of as a parametrisation of $\SO(3)/K_{4}$, and let $\xi \in \mathbb{C}$ be such that
\begin{equation}
    \label{eq:implicitxi}
    \wp(\xi) = \frac{b}{\overline{a}} - k_{2} \, .
\end{equation}
Then the corresponding point in $M_{\AH}$ has holomorphic coordinates
\begin{equation}
    \begin{aligned}
        u ={}& \frac{\sinh \bigl( 2 k_{1} \zeta(\xi) - \frac{\eta \xi}{2} + k_{1} \overline{a} \overline{b} \wp'(\xi) \bigr)}{k_{1} \overline{a}^{2} \wp'(\xi)} \, , \\[.4em]
        v ={}& \cosh \bigl( 2 k_{1} \zeta(\xi) - \frac{\eta \xi}{2} + k_{1} \overline{a} \overline{b} \wp'(\xi) \bigr) \, , \\[.4em]
        w ={}& k_{1}^{2} \overline{a}^{4} \wp'(\xi)^{2} \, ,
    \end{aligned}
\end{equation}
up to the sign ambiguity resulting from the monodromy.
Substituting~\eqref{eq:implicitxi} in the differential equation for $\wp$, and using $g_{2}$ and $g_{3}$ as given in~\cite{Hur83}, we obtain
\begin{equation}
    w = k_{1}^{2} \overline{a} \bigl( - 12 \overline{a}b^{2} k_{2} + 4b^{3} - 4 \overline{a}^{2} b \bigr) \, .
\end{equation}
Now since $\abs{a}^{2} + \abs{b}^{2} = 1$ a straightforward check shows that
\begin{equation}
    \abs{w}^{2} \leq 16 k_{1}^{4} (9k_{2}^{2} + 2) \sim 4 K^{2} \sim \log^2(1-k^{2}) \, ,
\end{equation}
for $k \to 1$.
Adapting the proof of Prop.~\ref{prop:L2AH} and using~\eqref{eq:MAHstab} we obtain the following.

\begin{prop}
    For every integer $n \geq 0$ the holomorphic section $w^{n} \psi_{0}$ is $\LL$ and therefore an element of $\mathcal{H}^{(-2n)}_{I_{3}}$.
\end{prop}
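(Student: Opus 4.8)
The plan is to follow the proof of Prop.~\ref{prop:L2AH} almost verbatim, the only new feature being that the estimate for $\abs{w}^{2n}$ contributes a power of $\log(1-k^{2})$ which must be absorbed by the Gaussian-type decay of $e^{-\Omega/\hslash}$ in the limit $k \to 1$. First I would dispose of the parts requiring no estimate: the function $w$ is an $I_{3}$-holomorphic coordinate on $R_{2}^{0}$, invariant under the monodromy $(u,v,w) \mapsto (-u,-v,w)$, so it descends to $M_{\AH}$, and since $\psi_{0} = e^{-\Omega/2\hslash}$ is a holomorphic frame for the prequantum line bundle over $M_{\AH}$ in the complex structure $I_{3}$, the section $w^{n}\psi_{0}$ is holomorphic. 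Moreover $w^{n}$ is $(-2n)$-homogeneous for the $T_{I_{3}}$-action~\eqref{eq:MAHstab}, and $\psi_{0} \in \mathcal{H}^{(0)}_{I_{3}}$ is $T_{I_{3}}$-invariant since $\Omega$ does not depend on the Euler angle $\varphi$; therefore $w^{n}\psi_{0}$, once shown to be $\LL$, lies in $\mathcal{H}^{(-2n)}_{I_{3}}$. It thus remains to prove $\int_{M_{\AH}} \abs{w}^{2n} e^{-\Omega/\hslash}\dif \vol < \infty$.

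For this I would repeat the reduction of Prop.~\ref{prop:L2AH}. On the dense open subset $(0,1) \times \SO(3)/K_{4}$ one has $\dif \vol = \tfrac{\beta^{2}\gamma^{2}\delta^{2}}{4k^{2}(1-k^{2})K^{2}}\,\dif m\,\sigma_{x}\sigma_{y}\sigma_{z}$ and $\Omega \ge \gamma\delta/8$; combining these with the pointwise bound $\abs{w}^{2} \le 16 k_{1}^{4}(9k_{2}^{2}+2)$ established above---which depends on the orbit parameter $k$ alone---Fubini--Tonelli reduces the integral, up to the finite $\SO(3)$-volume factor, to
\[
    \int_{0}^{1} \bigl( 16 k_{1}^{4}(9k_{2}^{2}+2) \bigr)^{n}\, e^{-\gamma\delta/8\hslash}\, \frac{\beta^{2}\gamma^{2}\delta^{2}}{k^{2}(1-k^{2})K^{2}}\,\dif m \, .
\]
This integrand is regular at $k \to 0$, so only the end $k \to 1$ has to be analysed.

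There I would invoke the asymptotics already recorded in the proof of Prop.~\ref{prop:L2AH}: from $K \sim \tfrac12\log(1-k^{2})$ one gets $16 k_{1}^{4}(9k_{2}^{2}+2) \sim 4K^{2} \sim \log^{2}(1-k^{2})$, while $\tfrac{\beta^{2}\gamma^{2}\delta^{2}}{k^{2}(1-k^{2})K^{2}} \sim \tfrac{\log^{2}(1-k^{2})}{1-k^{2}}$ and $e^{-\gamma\delta/8\hslash} \sim \exp\bigl(-\tfrac{1}{32\hslash}\log^{2}(1-k^{2})\bigr)$. Hence near $k=1$ the integrand is bounded by a constant times $\exp\bigl(-c\log^{2}(1-k^{2})\bigr)\,\log^{2n+2}(1-k^{2})/(1-k^{2})$ with $c = 1/32\hslash > 0$, and the change of variables $x = -\log(1-k^{2})$ turns the tail into $\int^{\infty} e^{-cx^{2}}x^{2n+2}\,\dif x < \infty$. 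The only step demanding a little care---bookkeeping rather than a genuine obstacle---is isolating the $k$-dependent factor $\abs{w}^{2n}$ so that Fubini--Tonelli applies cleanly, and checking that the end $k \to 1$ is the only non-compact region of $M_{\AH}$ to be controlled; after that the argument is, exactly as in Prop.~\ref{prop:L2AH}, the observation that Gaussian decay in $\log(1-k^{2})$ overwhelms the extra polynomial factor $\log^{2n}(1-k^{2})$.
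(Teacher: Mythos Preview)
Your proposal is correct and follows exactly the approach the paper intends: the paper's own proof is the single sentence ``Adapting the proof of Prop.~\ref{prop:L2AH} and using~\eqref{eq:MAHstab} we obtain the following,'' and you have carried out precisely that adaptation, correctly absorbing the extra factor $\log^{2n}(1-k^{2})$ into the Gaussian decay via the substitution $x = -\log(1-k^{2})$.
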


The analysis is more delicate for the functions $u$ and $v$.
Using~\eqref{eq:implicitxi} one can express $\overline{a} \overline{b}$ in terms of $\xi$ and write the argument of the hyperbolic functions as
\begin{equation}
    \Phi(\xi)
    = 2 k_{1} \zeta(\xi) - \frac{\eta \xi}{2} + k_{1} \wp'(\xi) \frac{k_{2} + \overline{\wp(\xi)}}{1 + \abs{ k_{2} + \wp(\xi)}^{2}} \, .
\end{equation}
It follows from the definitions and the Legendre relation that this function is periodic for the real period of $\wp$ and quasi-periodic for the imaginary period, with step $\pi i$, whence the sign ambiguity of $u$ and $v$.
Moreover one can show the poles of the summands cancel out, leaving a non-holomorphic analytic function---hallmark of the fact that the $\SO(3)$-action does not preserve the complex structure.
In particular its real part is bounded for fixed ``$k$''.

\subsection{Moduli spaces of framed \texorpdfstring{$\SU(r)$}{SU(r)}-instantons}
\label{sec:Instantons}

Let $r \geq 2$ and $k \geq 0$ be integers, and consider the moduli space $M_{k,r}$ of charge-$k$ framed $\SU(r)$-instantons on $\mathbb{R}^{4}$, which is a hyperk\"ahler manifold~\cite{Atiyah,Donaldson}.
Each of its complex structures can be described in terms of the ADHM construction as follows, after fixing an identification $\mathbb{R}^{4} \simeq \mathbb{C}^{2}$.
Consider the product 
\begin{equation}
    \mathbb{M} \coloneqq \End \bigl( \mathbb{C}^k \bigr)^2 \times \Hom \bigl( \mathbb{C}^k,\mathbb{C}^r \bigr) \times \Hom \bigl( \mathbb{C}^r,\mathbb{C}^k \bigr) \, ,
\end{equation}
with $\GL \bigl( \mathbb{C}^k \bigr)$-action given by
\begin{equation}
    g.(\alpha_0,\alpha_1,a,b) = \bigl( g \alpha_0 g^{-1}, g \alpha_1 g^{-1}, ga, bg^{-1} \bigr) \, .\footnote{Note $\mathbb{M}$ is a space of representations of a quiver on two nodes---having a double loop-edge at one node, and a pair of opposite arrows between the two nodes---and that the action naturally extends to $\GL(\mathbb{C}^k) \times \GL(\mathbb{C}^r)$ (which controls isomorphisms of representations).}
\end{equation}
Let $\mathbb{M}_{0}$ denote the set of elements of $\mathbb{M}$ satisfying the additional conditions
\begin{enumerate}[(i)]
    \item \label{cond:cut} $[\alpha_{0}, \alpha_{1}] + ab = 0$;
    \item \label{cond:stability} For all $\lambda, \mu \in \mathbb{C}$, $\begin{pmatrix} \alpha_{0} + \lambda \\ \alpha_{1} + \mu \\ a \end{pmatrix}$ is injective and $\begin{pmatrix} \lambda - \alpha_{0} & \alpha_{1} - \mu & b \end{pmatrix}$ is surjective.
\end{enumerate}
Then the restricted $\U(r)$-action is Hamiltonian with moment map
\begin{equation}
    \mu(\alpha_{0}, \alpha_{1}, a, b) \coloneqq [\alpha_{1}, \alpha_{1}^{*}] + [\alpha_{2}, \alpha_{2}^{*}] + bb^{*} - a^{*}a \, ,
\end{equation}
and there is an identification
\begin{equation}
    \label{eq:ADHM}
    M_{k,r} \simeq \mathbb{M}_{0} \sslash_{\mu} \U(k) \, .
\end{equation}

The rotation group $\SO(4)$ acts on $M_{k,r}$, and in particular the subgroup $\Sp(1)$, in the identification $\mathbb{R}^{4} \simeq \mathbb{H}$, transitively permutes the complex structures.
Furthermore, the work of Maciocia~\cite{Maciocia} shows that for each $q \in \mathbb{C}P^{1}$ the $T_{q}$-action has moment map
\begin{equation}
    m_{2} (A) = \frac{1}{16 \pi^{2}} \int_{\mathbb{R}^{4}} \norm{x}^{2} \, \operatorname{tr} F_{A}^{2} \, .
\end{equation}
This function is clearly $\Sp(1)$-invariant, and therefore a hyperkähler potential, so one can construct an $\Sp(1)$-invariant family of pre-quantum line bundles endowed with holomorphic frames as in \S~\ref{sec:HKpotSec}.

The function $m_{2}$ is not, however, a proper map.
By op.~cit., under the identification~\eqref{eq:ADHM} it corresponds to the norm-squared function $f \colon \mathbb{M}_{0} \to \mathbb{R}$, which is $\U(k)$-invariant but not proper, on account of the open condition~\ref{cond:stability}.
However Donaldson's work~\cite{Donaldson} identifies the symplectic reduction~\eqref{eq:ADHM} with the GIT quotient of $\mathbb{M}_{0}$ by $\GL(k,\mathbb{C})$, whereupon~\ref{cond:stability} translates into a stability condition.
One may then include the semi-stable points to obtain a partial compactification
\begin{equation}
    \overline{M}_{k,r} \coloneqq \mathbb{M} \sslash_{\mathrm{GIT}} \GL(k,\mathbb{C}) \, ,
\end{equation}
which is smooth by the work of Nakajima and Yoshioka~\cite[Cor.~2.2]{NakYosh}.
The map $f$ descends then to a proper one on $\overline{M}_{k,r}$; it is also clear that its gradient is complete on the quotient, showing that geometric quantisation on this space yields finite-rank isotypical components by Thm.~\ref{thm:fdmm}.
On the other hand, the codimension of the boundary $\overline{M}_{k,r} \setminus M_{k,r}$ is greather than $2$, so that Hartogs's theorem allows for the extension of holomorphic functions on $M_{k,r}$, which yields the finite-dimensionality of the isotypic components over this latter space.

\section{Outlook and further perspectives}
\label{sec:further}

There are more spaces that fit some of the requirements for our quantisation scheme.

\vspace{5pt}

By the work of Kronheimer~\cite{Kronheimer} the nilpotent (co)adjoint orbits of complex semisimple (1-connected) Lie groups are hyperk\"ahler manifolds with transitively permuting $\SO(3)$-actions, and by Swann's work~\cite{Swann} they admit hyperkähler potentials.
Indeed, Prop.~5.5 of op.~cit. states that such a potential exists on a hyperkähler manifold if it admits an $\Sp(1)$-action permuting the complex structure and such that, calling $X_{q}$ the vector field generating the $T_{q}$-action for each $q \in \mathbb{C}P^{1}$, the vector field $I_{q} X_{q}$ is independent of $q$.
Following Prop.~6.5 of op.~cit., Swann goes on to check that this condition is verified for Kronheimer's space, thus establishing the existence of a hyperkähler potential.
This is a particular instance of hyperk\"ahler moduli spaces of solutions of Nahm's equations, specifically on a half-line with nilpotent boundary conditions.\footnote{The hyperk\"ahler metric on general orbits was constructed in~\cite{Biq96,Kov96}.}
Since Nahm's equations come naturally with a quaternionic structure and $\Sp(1)$-action, the resulting manifolds have symmetries of the kind considered in this paper, and different choices of domain and boundary conditions give rise to different hyperk\"ahler structures.
For instance, semisimple boundary conditions on a half-line result in orbits of semisimple elements~\cite{Kro90b}, while the study of Nahm's equations on a compact interval leads to the cotangent bundle $\operatorname{T}^{*} G$~\cite{Kro88,DS96}.
By the works of Mayrand~\cite{May19,May19a,May20}, the latter comes with natural $\Sp(1)$-equivariant families of K\"ahler potentials and moment maps for the stabilizers $T_{q}$, rather than a hyperk\"ahler one, and they enjoy interesting properties that might lead to a variation of our main construction.

Also, as mentioned in the introduction, many new interesting hyperk\"ahler metrics can be defined on moduli spaces of irregular connections/Higgs bundles over (wild generalisations of) Riemann surfaces~\cite{BB04,Wit08}, with simple examples reviewed in~\cite{Boa18}: the ``multiplicative'' versions of the Eguchi--Hanson space and Calabi's examples (whose standard ``additive'' versions are quiver varieties on two nodes).
This fits into a more general (new) multiplicative theory of quiver varieties~\cite{Boa15}, involving a ``fission'' gluing operation generalising the TQFT construction of moduli spaces of flat connections~\cite{Boa07,Boa14}; note that conjecturally this produces a lot more new hyperk\"ahler manifolds~\cite{Boa09}, beyond (wild) nonabelian Hodge spaces.
See~\cite{Rem19,Rem20,FelRem} about quantum moduli spaces of meromorphic connections. 

Finally the example of \S~\ref{sec:Instantons}, i.e. the moduli spaces of framed $\SU(r)$-instantons, opens the way for further discussion on the relation between the generating series produced by this new quantisation scheme and the well-known Nekrasov partition functions.

\appendix
\section{Comparison with the standard approach}
\label{sec:appendix}

In this section we shall correct the family of quantum Hilbert spaces $\mathcal{H}_q$ to obtain finite-rank flat vector bundles of isotypical components (under the main assumption), as well as unitary equivalences between the quantisation of $M$ with respect to the given K\"ahler polarisations.

Based on Thm.~\ref{thm:CurvaturePF}, we do this by a correcting twist of the finite-rank bundles $\mathcal{H}^{(d)}_\lambda \to \mathbb{C}P^1$; namely consider the tensor product
\begin{equation}
	\widetilde{\mathcal{H}}_{\lambda}^{(d)} \coloneqq \mathcal{H}_{\lambda}^{(d)} \otimes \mathcal{L}^{-d} \, , \qquad \text{for} \qquad d \in \mathbb{Z} \, , \lambda\in\Lambda^{(d)} \, .
\end{equation}
This new vector bundle comes with a $\mpHKo$-action, and we denote $\nabla^{\widetilde{\mathcal{H}}_{\lambda}^{(d)}}$ the resulting $\mpHKo$-invariant flat connection. 

Since $\mathbb{C}P^1$ is simply-connected the parallel transport defines canonical unitary isomorphisms 
\begin{equation}
\label{eq:unitary_cocycle}
    \widetilde{\mathcal{H}}^{(d)}_{q,\lambda} \longrightarrow \widetilde{\mathcal{H}}^{(d)}_{q',\lambda} \, , \qquad \text{for } q, q' \in \mathbb{C}P^1 \, ,
\end{equation}
satisfying 1-cocycle identities.
Analogously to the above we then define
\begin{equation}
    \overline{\bigoplus_{\lambda \in \Lambda^{(d)}} \widetilde{\mathcal{H}}_{q,\lambda}^{(d)}} \eqqcolon \widetilde{\mathcal{H}}_q^{(d)} \subseteq \widetilde{\mathcal{H}}_q \coloneqq \overline{\bigoplus_{d \in \mathbb{Z}} \widetilde{\mathcal{H}}^{(d)}_q} \, ,
\end{equation}
and these families of Hilbert spaces carry a 1-cocycle of unitary isomorphisms induced from~\eqref{eq:unitary_cocycle}: This is the usual geometric quantisation construction.

Now we can introduce super Hilbert spaces $\widetilde{H}^{(d)}_{\lambda,j}$ analogously to \S~\ref{sec:super_spaces}, taking the holomorphic cohomology of the twisted vector bundles $\widetilde{\mathcal{H}}^{(d)}_{\lambda} \to \mathbb{C}P^1$.

\begin{thm}[cf. Thm.~\ref{thm:super_reps}]
    There is a unitary action $\mpHKo \rightarrow \U \bigl( \widetilde{H} \bigr)$
    preserving the nested splittings:
    \begin{equation}
        \widetilde{H} \coloneqq \overline{\bigoplus_{d \in \mathbb{Z}} \widetilde{H}^{(d)}} \, , \quad \widetilde{H}^{(d)} \coloneqq \overline{\bigoplus_{\lambda \in \Lambda^{(d)}} \widetilde{H}_\lambda^{(d)}} \, , \quad  \widetilde{H}_\lambda^{(d)} \coloneqq \bigoplus_{j = 1}^{m_{\lambda}^{(d)}} \widetilde{H}^{(d)}_{\lambda,j} \, .
    \end{equation}
\end{thm}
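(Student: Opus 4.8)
The plan is to repeat the proof of Thm.~\ref{thm:thm_Main} almost verbatim, with the twisted bundles $\widetilde{\mathcal{H}}^{(d)}_{\lambda,j} = \mathcal{H}^{(d)}_{\lambda,j} \otimes \mathcal{L}^{-d}$ in place of the original ones. First I would pin down these bundles explicitly. By Thm.~\ref{thm:CurvaturePF} there is an $\mpHKo$-equivariant isomorphism $\mathcal{H}^{(d)}_{\lambda,j} \simeq \mathcal{L}^{d} \otimes \underline{V_{\lambda}}$ of Hermitian holomorphic vector bundles with connection; tensoring with $\mathcal{L}^{-d}$ and using that $\mathcal{L} \otimes \mathcal{L}^{-1}$ is the trivial $\Sp(1)$-equivariant line bundle (with trivial connection and holomorphic structure, since $\Sp(1)$ has no nontrivial characters) yields an $\mpHKo$-equivariant isomorphism
\begin{equation}
    \widetilde{\mathcal{H}}^{(d)}_{\lambda,j} \simeq \underline{V_{\lambda}} \, ,
\end{equation}
in which $\mpSpo$ acts fibrewise by the simple module $V_{\lambda}$ and $\Sp(1)$ acts only by translating the base. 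This simultaneously re-derives the flatness of $\nabla^{\widetilde{\mathcal{H}}_{\lambda}^{(d)}}$ recorded above, and shows that the parallel-transport cocycle~\eqref{eq:unitary_cocycle} is simply the tautological identification of the constant fibres.

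Next I would take holomorphic cohomology. Since $\underline{V_{\lambda}} \to \mathbb{C}P^{1}$ is holomorphically trivial, $H^{*}(\mathbb{C}P^{1}, \underline{V_{\lambda}}) \simeq H^{*}(\mathbb{C}P^{1}, \mathcal{O}) \otimes V_{\lambda} \simeq V_{\lambda}$, concentrated in degree zero; hence each super space $\widetilde{H}^{(d)}_{\lambda,j}$ is purely even and isomorphic to $V_{\lambda}$. The group $\mpHKo$ acts on $\mathbb{C}P^{1}$ through $\Ad \colon \mpHKo \to \SO(3)$ by biholomorphic isometries of the round metric, and it preserves the holomorphic and Hermitian structures of $\widetilde{\mathcal{H}}^{(d)}_{\lambda,j}$; therefore Dolbeault cohomology is $\mpHKo$-equivariant and the $\LL$-pairing on harmonic representatives is $\mpHKo$-invariant. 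This endows $\widetilde{H}^{(d)}_{\lambda,j} \simeq V_{\lambda}$ with a unitary $\mpHKo$-action in which $\Sp(1)$ acts trivially, its action on $\Sp(1)$-equivariantly constant sections being trivial.

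Finally I would form the $\LL$-completed orthogonal direct sums over $j$, over $\lambda \in \Lambda^{(d)}$, and over $d \in \mathbb{Z}$, assembling these pieces into the asserted unitary $\mpHKo$-action on $\widetilde{H}$, which preserves the three nested splittings by construction. I would also remark that $\widetilde{H}$ is thereby canonically isometric, via parallel transport of the flat connection, to the Hilbert space $\mathcal{H}_{q}$ for any single $q \in \mathbb{C}P^{1}$, with the resulting $\Sp(1)$-representation rendered trivial, as anticipated.

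The main obstacle is the same delicate point as in the proof of Thm.~\ref{thm:thm_Main}: establishing the $\mpHKo$-invariance of the Hermitian structure on holomorphic cohomology. Because $\Sp(1)$ genuinely moves the base $\mathbb{C}P^{1}$, one must exploit that it acts by isometries of the round metric---hence by biholomorphisms---preserving all the bundle data, so that the Dolbeault Laplacian commutes with the action and its kernel inherits the invariant $\LL$-inner product. Granted this, the statement is a formal consequence of Thm.~\ref{thm:CurvaturePF} together with the holomorphic triviality of $\mathcal{L}^{d} \otimes \mathcal{L}^{-d}$.
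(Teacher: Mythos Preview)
Your proposal is correct and follows exactly the route the paper intends: the paper gives no separate proof for this appendix theorem, simply signalling ``cf.\ Thm.~\ref{thm:thm_Main}'' and then recording the comparison $\widetilde{H}^{(d)}_{\lambda,j} \simeq V_{\lambda} \otimes W^{(0)}$ with $W^{(0)}$ the trivial one-dimensional $\Sp(1)$-module. Your argument fills in precisely these details---tensoring the identification of Thm.~\ref{thm:CurvaturePF} by $\mathcal{L}^{-d}$, observing that $\mathcal{L}^{d}\otimes\mathcal{L}^{-d}$ is $\Sp(1)$-equivariantly trivial, and taking cohomology---and your remark that the $\Sp(1)$-factor is thereby rendered trivial is exactly the point the paper makes immediately after the theorem.
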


Finally we can compare this representation with the one constructed in \S~\ref{sec:super_spaces}, finding that twisting trivializes part of the action.
Namely, the present super Hilbert space $\widetilde{H}_{\lambda,j}^{(d)} \simeq  V_\lambda \otimes W^{(0)}$ replaces the original $H_{\lambda,j}^{(d)} \simeq V_\lambda \otimes W^{(d)}$ as a $\mpHKo$-module---recalling that $W^{(0)}$ is the trivial one-dimensional $\Sp(1)$-module. 
This should be compared with the (more) interesting irreducible representations of $\mpHKo$ obtained from Thm.~\ref{thm:super_reps}.

\todos

\end{document}